\documentclass[11pt]{article}
\usepackage{graphicx}
\usepackage{amsmath,amsthm,amssymb,enumerate}%, esint}
\usepackage{euscript,mathrsfs}
\usepackage{color}
\usepackage{dsfont}
\usepackage[notref,notcite]{showkeys}
\usepackage[left=2cm,right=2cm,top=4cm,bottom=4cm]{geometry}
\usepackage{color}
\usepackage[framemethod=tikz]{mdframed}
\usepackage{bm}
\allowdisplaybreaks

\usepackage{esint}
\usepackage{soul}

\catcode`\@=11 \@addtoreset{equation}{section}

\catcode`\@=12

\allowdisplaybreaks

\newtheorem{Theorem}{Theorem}[section]
\newtheorem{Proposition}[Theorem]{Proposition}
\newtheorem{Lemma}[Theorem]{Lemma}
\newtheorem{Corollary}[Theorem]{Corollary}

\theoremstyle{definition}
\newtheorem{Definition}[Theorem]{Definition}

\newtheorem{Remark}[Theorem]{Remark}

\newcommand{\bTheorem}[1]{
\begin{Theorem} \label{T#1} }
\newcommand{\eT}{\end{Theorem}}

\newcommand{\bProposition}[1]{
\begin{Proposition} \label{P#1}}
\newcommand{\eP}{\end{Proposition}}

\newcommand{\bLemma}[1]{
\begin{Lemma} \label{L#1} }
\newcommand{\eL}{\end{Lemma}}

\newcommand{\bCorollary}[1]{
\begin{Corollary} \label{C#1} }
\newcommand{\eC}{\end{Corollary}}

\newcommand{\bRemark}[1]{
\begin{Remark} \label{R#1} }
\newcommand{\eR}{\end{Remark}}

\newcommand{\bDefinition}[1]{
\begin{Definition} \label{D#1} }
\newcommand{\eD}{\end{Definition}}

\newcommand{\Ds}{\mathbb{D}_x}

\newcommand{\bfphi}{\boldsymbol{\varphi}}

\newcommand{\bFormula}[1]{
\begin{equation} \label{#1}}
\newcommand{\eF}{\end{equation}}

\newcommand{\Ov}[1]{\overline{#1}}

\newcommand{\DC}{C^\infty_c}
\newcommand{\aleq}{\stackrel{<}{\sim}}

\newcommand{\vu}{\vc{u}}
\newcommand{\vr}{\rho}

\newcommand{\Div}{{\rm div}_x}
\newcommand{\Grad}{\nabla_x}

\newcommand{\dx}{\,{\rm d} {x}}

\newcommand{\dt}{\,{\rm d} t }

\newcommand{\dxdt}{\dx  \dt}
\newcommand{\intO}[1]{\int_{\Omega} #1 \ \dx}

\newcommand{\D}{{\rm d}}

\newcommand{\R}{\mathbb{R}}

\newcommand{\br}{ \nonumber \\ }

%%%%%%%%%%%%%%%

\newcommand{\vc}{\mathbf}

\newcommand{\bu}{\mathbf{u}}
\newcommand{\bphi}{\bm{\varphi}}
\renewcommand{\leq}{\leqslant}
\renewcommand{\geq}{\geqslant}

\newcommand{\bfomega}{\boldsymbol{\omega}}
\def\softd{{\leavevmode\setbox1=\hbox{d}%
          \hbox to 1.05\wd1{d\kern-0.4ex{\char039}\hss}}}
%%%%%%%%%%%%%%%%%%%%%%%%%%%%%%%%%%%%%%%%%%%%%%%%%%%%%%%%%%%%%%%%%%%%%%%%%%%%%%%%%%%%%%%%%%%%%%%%%%%%%%%%%
\definecolor{Cgrey}{rgb}{0.85,0.85,0.85}
\definecolor{Cblue}{rgb}{0.50,0.85,0.85}
\definecolor{Cred}{rgb}{1,0,0}
\definecolor{fancy}{rgb}{0.10,0.85,0.10}

\newcommand\Cbox[2]{%
    \newbox\contentbox%
    \newbox\bkgdbox%
    \setbox\contentbox\hbox to \hsize{%
        \vtop{
            \kern\columnsep
            \hbox to \hsize{%
                \kern\columnsep%
                \advance\hsize by -2\columnsep%
                \setlength{\textwidth}{\hsize}%
                \vbox{
                    \parskip=\baselineskip
                    \parindent=0bp
                    #2
                }%
                \kern\columnsep%
            }%
            \kern\columnsep%
        }%
    }%
    \setbox\bkgdbox\vbox{
        \color{#1}
        \hrule width  \wd\contentbox %
               height \ht\contentbox %
               depth  \dp\contentbox
        \color{black}
    }%
    \wd\bkgdbox=0bp%
    \vbox{\hbox to \hsize{\box\bkgdbox\box\contentbox}}%
    \vskip\baselineskip%
}

\mdfdefinestyle{MyFrame}{%
	linecolor=black,
	outerlinewidth=1pt,
	roundcorner=5pt,
	innertopmargin=\baselineskip,
	innerbottommargin=\baselineskip,
	innerrightmargin=10pt,
	innerleftmargin=10pt,
	backgroundcolor=white!20!white}

%%%%%%%%%%%%%%%%%%%%%%%%%%%%%%%%%%%%%%%%%%%%%%

\date{}

%%%%%%%%%%%%%%%%%%%%%%%%%%%%%%%%%%%%%%%%

\makeindex
\begin{document}

%%%%%%%%%%%%%%%%%%%%%%%%%%%%%%%%

\title{On the effect of a large cloud of rigid particles on the motion of an incompressible non--Newtonian fluid}

\author{Eduard Feireisl
	\thanks{The work of E.F. was partially supported by the
		Czech Sciences Foundation (GA\v CR), Grant Agreement
		24-11034S. The Institute of Mathematics of the Academy of Sciences of
		the Czech Republic is supported by RVO:67985840. The research of A.R. has been supported by the Alexander von Humboldt-Stiftung / Foundation. A.Z has been partially supported by the Basque Government through the BERC 2022-2025 program and by the Spanish State Research Agency through BCAM Severo Ochoa CEX2021-001142 and through project PID2020-114189RB-I00 funded by Agencia Estatal de Investigación (PID2020-114189RB-I00 / AEI / 10.13039/501100011033). A.Z. was also partially supported  by a grant of the Ministry of Research, Innovation and Digitization, CNCS - UEFISCDI, project number PN-III-P4-PCE-2021-0921, within PNCDI III.} 
	\and Arnab Roy$^1$ \and Arghir Zarnescu$^{2,3,4}$
}

\date{\today}

\maketitle

\bigskip

\centerline{$^*$  Institute of Mathematics of the Academy of Sciences of the Czech Republic}

\centerline{\v Zitn\' a 25, CZ-115 67 Praha 1, Czech Republic.} 

\centerline{$^1$ Technische Universit\"{a}t Darmstadt}

\centerline{Schlo\ss{}gartenstra{\ss}e 7, 64289 Darmstadt, Germany.}

\centerline{$^2$ BCAM, Basque Center for Applied Mathematics}

\centerline{Mazarredo 14, E48009 Bilbao, Bizkaia, Spain.}

\centerline{$^3$IKERBASQUE, Basque Foundation for Science, }

\centerline{Plaza Euskadi 5, 48009 Bilbao, Bizkaia, Spain.}

\centerline{$^4$`Simion Stoilow" Institute of the Romanian Academy,}

\centerline{21 Calea Grivi\c{t}ei, 010702 Bucharest, Romania.}

\maketitle

\begin{abstract}
	
We show that the collective effect of $N$ rigid bodies 
{$(\mathcal{S}_{n,N})_{n=1}^N$ of diameters $(r_{n,N})_{n=1}^N$} immersed in an incompressible non--Newtonian fluid is 
negligible in the asymptotic limit $N \to \infty$ as long as their total packing volume {$\sum_{n=1}^N r_{n,N}^d$, $d=2,3$} tends to zero 
exponentially --  ${\sum_{n=1}^N r_{n,N}^d \approx A^{-N}}$ -- for a certain constant $A > 1$. The result is rather surprising and in a sharp contrast with the 
associated homogenization problem, where the same number of obstacles can completely stop the fluid motion in the case of 
shear thickening viscosity. A large class of non--Newtonian fluids is included, for which the viscous stress is a subdifferential of a convex potential.

\end{abstract}

{\bf Keywords:} Non-Newtonian fluid, fluid--structure interaction problem, small rigid body 
\bigskip

%\tableofcontents

\section{Introduction}
\label{i}

We consider the motion of {$N$ rigid bodies $(\mathcal{S}_{n,N})_{n=1}^N$ of diameters $(r_{n,N})_{n=1}^N$,
\begin{equation} \label{i1}
	0 < r_{1,N} \leq r_{2,N} \leq \dots \leq r_{N,N},	
\end{equation}	
immersed in a non-Newtonian fluid confined to a bounded spatial domain $\Omega \subset \R^d$, $d=2,3$.} By non-Newtonian we mean that the viscous stress $\mathbb{S}$ is determined as a subdifferential of a convex function $F$ depending on the symmetric velocity gradient
\[
\Ds \vu = \frac{1}{2} (\Grad \vu + \Grad^t \vu).
\] 
Specifically, we consider the viscous stress $\mathbb{S}$ satisfying  
\[
\mathbb{S} \in \partial F(\Ds \vu),\ \ \mbox{where}\ \Ds \vu = \frac{1}{2} (\Grad \vu + \Grad^t \vu),\ F: R^{d \times d}_{\rm sym} \to [0, \infty) \ \mbox{convex}.
\]
{
In addition, we assume the fluid is incompressible, specifically the fluid velocity 
$\vu = \vu(t,x)$ is solenoidal,
\begin{equation} \label{i2}
	\Div\vu = 0. 
\end{equation}
}

We focus on the case of a super--quadratic growth of $F$, where the associated capacity of the family of rigid bodies is strictly positive. We show that the presence of rigid bodies has no influence on the fluid motion in the asymptotic limit {$N \to \infty$ as soon as 
\[
\sum_{n=1}^N r^d_{n,N} \leq A^{-N} 
\]
for a certain constant $A > 1$, see Theorem \ref{Tw1} below. }
  The result shows 
a substantial difference between the fluid--structure interaction problem, where the rigid objects share the same velocity with the adjacent fluid, and the homogenization limit, where the fluid velocity is fixed to be zero on static obstacles.

For the sake of simplicity, we consider the potential $F$ in the form to fit the 
$L^p-$framework. Generalizations to a larger class of potentials satisfying the so--called $\Delta^2_2$ condition are possible.  

\subsection{Fluid structure interaction problem} 
\label{FS}

{
The bodies are represented by connected compact sets $(\mathcal{S}_{n,N})_{n=1}^N$. For the sake of simplicity 
we suppose the mass density of each body is a positive constant $\vr_{n,N} > 0$, $n = 1, \dots, N$. Accordingly, 
without loss of generality, we fix
the barycentres of $\mathcal{S}_{n,N}$ to be located at the origin,  
\[
\frac{1}{|\mathcal{S}_{n,N}|} \int_{\mathcal{S}_{n,N}} \vc{x} \ dx = 0.
\] 
The positions of $\mathcal{S}_{n,N} (t)$  
at the time $t \geq 0$ are determined by a family of affine isometries
\begin{align}
( \sigma_{n,N} (t) )_{t \geq 0},\ \sigma_{n,N}(t) \vc{x} &= \vc{h}_{n,N}(t) + \mathbb{O}_{n,N}(t) \cdot \vc{x} ,\
\mathbb{O}_{n,N}(t) \in SO(d),\br
\mathcal{S}_{n,N}(t) &= \sigma_{n,N}(t) [ \mathcal{S}_{n,N}], \ n = 1, \dots, N,\ 
t \geq 0,
\nonumber
\end{align}
}
with the associated rigid velocities 
\[
\vc{w}_{n,N} (t,x) = \vc{Y}_{n,N}(t) + \mathbb{Q}_{n,N}(t) (x - \vc{h}_{n,N}(t)),\ 
\vc{Y}_{n,N} = \frac{\D }{\dt} \vc{h}_{n,N}(t),\ \mathbb{Q}_{n,N}(t) = \left[ \frac{\D }{\dt} 
\mathbb{O}_{n,N}(t) \right] \circ \mathbb{O}^{-1}_{n,N}(t). 
\]

The time evolution of the fluid velocity is determined by the linear momentum balance
\begin{equation} \label{i3}
\vr_f \Big( \partial_t \vu +  \vu \cdot \Grad \vu \Big) + \Grad \Pi = \Div \mathbb{S} + \vr_f \vc{g} ,
\end{equation}
{where $\vr_f > 0$ is the fluid density, $\Pi$ the pressure, and $\vc{g}$ a given external driving force.} We consider a general form of 
the viscous stress {$\mathbb{S}$, 
\begin{equation} \label{i4}
\mathbb{S} \in \partial F (\Ds \vu), 
	\end{equation} }
where $F$ is a convex potential defined on the space  
$R^{d \times d}_{\rm sym}$ of
symmetric tensors. More specifically, we suppose 
\begin{align} 
F: 	R^{d \times d}_{\rm sym} &\to [0, \infty) \ \mbox{convex},\ F(0) = 0, \br
\underline{F} | \mathbb{D} |^p - c &\leq F(\mathbb{D}) \leq \Ov{F} |\mathbb{D}|^p + c \ \mbox{for any} \ \mathbb{D} \in \R^{d \times d}_{\rm sym},
		\label{i5}		
 \end{align}
where $\underline{F}$, $\Ov{F}$ are positive constants. The class of potentials 
is large enough to accommodate the scale of models ranging from the standard Navier--Stokes fluid with $F = \mu |\mathbb{D}|^2$ to { the power-law fluids $F(\mathbb{D}) = (\alpha |\mathbb{D}|^2 + \beta |\mathbb{D}|^p)$, $\beta > 0$, referred to as shear-thickening if $p>2$.
Moreover, the potential $F$ need not be differentiable at $\mathbb{D} = 0$, 
which is characteristic for} Bingham fluids 
\begin{align} 
\Ds \vu = 0 \ &\Leftrightarrow \ 
\ |\mathbb{S}| \leq \Ov{S}, \br  |\mathbb{S}| \geq \Ov{S} 
\ &\Leftrightarrow \ \mathbb{S} = \Ov{S} 
\frac{\Ds \vu }{|\Ds \vu|} + 2 \mu \Ds \vu,
\nonumber
\end{align}
and other types of ``implicitly'' constituted fluids discussed by M\' alek and Rajagopal 
\cite{MAL5}, \cite{MAL1}. 

The standard fluid structure interaction problem requires continuity of the velocities and stresses on the fluid--body interface. Keeping in mind the mass densities $\vr_{n,N}$ are constant, we introduce the mass of the bodies, 
\[
m_{n,N} = \int_{\mathcal{S}_{n,N}} \vr_{n, N} \ \dx = \vr_{n,N} |\mathcal{S}_{n,N}|,  
\] 
and the barycentres  
\[
\vc{h}_{n,N}(t) = \frac{1}{|\mathcal{S}_{n,N}|} \int_{\mathcal{S}_{n,N}(t)} \vc{x} \ \dx.
\]
The inertial tensor is given by
 {
\[
[\mathbb{J}_{n,N} \cdot \vc{a}] \cdot \vc{b} = \vr_{n,N}
\int_{\mathcal{S}_{n,N} }  \Big[ \vc{a} \wedge x  \Big] \cdot \Big[ \vc{b} \wedge x \Big] \ \dx,\quad\mbox{if}\quad d=3,  
\]
and 
\begin{equation*}
\mathbb{J}_{n,N} = \vr_{n,N} \int_{\mathcal{S}_{n,N} }   |x|^2 \ \dx \quad\mbox{if}\quad d=2.
\end{equation*}
Here, the symbol $\vc{a} \wedge \vc{b}$ denotes the standard 
cross product of vectors $\vc{a}, \vc{b} \in \R^3$, while 
\begin{equation*}
	\vc{a} \wedge \vc{b} = a_1b_2-a_2b_1 \in \R^1,\quad \alpha \wedge \vc{b}=\alpha (-b_2, b_1) \in \R^2 \ \mbox{if} \ \vc{a}, \vc{b} \in \R^2.
\end{equation*}		
}

The continuity of the velocity field, or the \emph{no--slip} boundary conditions read 
\begin{equation} \label{i6}
	\vu(t, \cdot) |_{\mathcal{S}_{n,N}(t) } = \vc{w}_{n,N} (t, \cdot),\ 
	n=1, \dots, N,\ t \geq 0. 
\end{equation}
The continuity of the stresses requires 
\begin{equation} \label{i7}
m_{n,N} \frac{\D^2 \vc{h}_{n,N}}{\dt^2 }(t)	= -\int_{\partial \mathcal{S}_{n,N}(t)}
\Big( \mathbb{S} - \Pi \ \mathbb{I} \Big) \cdot \vc{n}\ \D S_{x} +  \int_{\mathcal{S}_{n,N}(t)} \vr_{n,N}\ \vc{g} 
\ \dx,\ 
n = 1, \dots, N, 
	\end{equation}
and 
\begin{align} 
\mathbb{J}_{n,N} \frac{\D }{\dt} \bfomega_{n,N}(t) &= \mathbb{J}_{n,N}(t) \bfomega_{n,N} \wedge \bfomega_{n,N} - \int_{\partial \mathcal{S}_{n,N}(t)} 
(x - \vc{h}_{n,N}(t)) \wedge (\mathbb{S} - \Pi\ \mathbb{I})\cdot \vc{n}\ \D S_x 
\br &+ \int_{\mathcal{S}_{n,N}(t)} \vr_{n,N} (x - \vc{h}_{n,N}(t)) \wedge \vc{g} 
\ \dx,\ n = 1, \dots, N,
\label{i8}
	\end{align}
where the angular velocities $\bfomega_{n,N}$ are defined through the identity
\begin{align}
	\mathbb{Q}_{n,N}(t)(x - \vc{h}_{n,N}) = \bfomega_{n,N}(t) \wedge (x - \vc{h}_{n,N}) .
	\nonumber
\end{align}

Finally, we suppose the whole system occupies a bounded regular domain $\Omega \subset \R^d$, $d=2,3$, and impose the no--slip conditions 
\begin{equation} \label{i9}
	\vu|_{\partial \Omega} = 0.
\end{equation}

\subsection{Asymptotic limit}

We consider global--in--time solutions to the fluid--structure interaction problem specified in Section \ref{FS} in the asymptotic limit of a large number of rigid bodies $N \to \infty$ and vanishing total {packing volume} 
\[
{\rm vol}[\cup_{n=1}^N\mathcal{S}_{n,N}] \aleq 
\sum_{n=1}^N r_{n,N}^d \equiv {\rm vol}[N] \to 0 \ \mbox{as}\ N \to \infty.
\]
We adapt 
the elegant framework of weak solutions introduced by Judakov \cite{Juda} , see also Gunzburger, Lee, and Seregin \cite{GLSE}. The \emph{existence} of global--in--time solutions in Judakov's class was proved in \cite{EF65} for Newtonian fluids and 
$d=3$, by San Martin, Starovoitov, and Tucsnak \cite{SST} for Newtonian fluids and $d=2$, and in 
\cite{FeHiNe} for a particular case of a non--Newtonian fluid  
corresponding to $F(\mathbb{D}) = (\alpha |\mathbb{D}|^2 + \beta |\mathbb{D}|^p)$, $\beta > 0$, $p \geq 4$, $d = 3$. Besides, there is a large number of existence results ``up to the first contact''. In the case of an increasing number of rigid bodies, however, the time of the first contact may 
be short and even approach zero in the asymptotic limit unless $p$ is large enough, see e.g. Starovoitov \cite{Staro}, \cite{STA}. 

{Intuitively, one might expect that the presence of a ``small'' moving object wouldn't significantly alter the fluid motion. Indeed, there are several results available concerning the motion of a \emph{single} rigid body in viscous Newtonian fluids. Dashti and Robinson \cite{MR2781594} consider a system involving a viscous fluid interacting with a non-rotating rigid disc, demonstrating that the presence of the disc has no impact on the flow in the asymptotic regime. Iftimie et al. \cite{MR2244381} investigated the flow around a small rigid obstacle, while Lacave \cite{MR2557320} examined the limit of a viscous fluid's flow around the exterior of a thin obstacle that contracts to a curve. Further results have been obtained by Lacave and Takahashi \cite{LacTak},  He and Iftimie \cite{HeIft1, HeIft2}, and Chipot et al. \cite{MR4066792}. More recently, Bravin and Nečasová [5] demonstrated that when the density is very high, the rigid object continues to move at its initial velocity, unaffected by the surrounding fluid. 
}

In \cite{FeiRoyZar2022}, we have shown that a large family of rigid bodies has no impact on the motion of a viscous Newtonian fluid as long as their number and the diameter are properly related. Our goal is to extend this result to the non--Newtonian case. In particular, we consider the situation 
\begin{equation} \label{i10}
	p \geq d ,
	\end{equation}
where $p$ is the exponent in \eqref{i5}. 
The result is quite surprising in particular for $p > d$. 
Indeed, for $p > d$, the velocity field $\vu$ is continuous and the total $p-$ capacity of any (even finite) number of points is positive. In the limit case $p = d$, we show the same result still allowing positive $p-$capacity of the total ensemble of rigid bodies. It is worth noting that for $d = p =2$, the Newtonian case considered already in \cite{FeiRoyZar2022} is included. Here, we improve the result of \cite{FeiRoyZar2022} by showing strong convergence of the velocity gradients.

The key ingredient of our approach is the time dependent restriction operator constructed 
in \cite{FeiRoyZar2022}, well adapted to the fluid--structure interaction problems including collisions of several rigid bodies. The new difficulty compared to the linear Newtonian case is the non--linear dependence of the stress tensor on $\Ds \vu$. This problem is overcome replacing the dissipation term $\mathbb{S}: \Ds \vu$ in the energy inequality by the expression 
\[
F(\Ds \vu) + F^* (\mathbb{S}), 
\]
where $F^*$ is the convex conjugate. The strong convergence of $\Ds \vu$ in the asymptotic limit is then obtained by the methods of convex analysis.

The paper is organized as follows. In Section \ref{w}, we introduce the weak formulation, formulate the main hypotheses, and state our main result. 
In Section \ref{r}, we recall the definition of the restriction operator and 
its basic properties adapted to the regularity class of non--Newtonian fluids. 
{The main result, concerning the asymptotic limit in the large $N$/small ${\rm vol}[N]$ regime is proved in Section \ref{a}. The paper is concluded by a short discussion in Section \ref{c}. }

\section{Weak formulation, main result}
\label{w}

Adapting the original definition of Judakov \cite{Juda}, we introduce a weak formulation of the fluid--structure interaction problem specified in Section \ref{FS}.

\subsection{Weak formulation}

Given the positions of the right bodies $\mathcal{S}_{n,N}(t)$, we introduce the fluid domain 
\[
\Omega_{f,N} (t) = \Omega \setminus \cup_{n=1}^N \mathcal{S}_{n,N}(t). 
\]
The total density of the fluid--structure system satisfies the transport equation 
\[
\partial_t \vr + \vu\cdot\Grad  \vr  = 0
\] 
in the weak sense, meaning 
\begin{equation} \label{w1}
	\int_0^T \int_{\R^d} \Big[ \vr \partial_t \varphi + \vr \vu \cdot \Grad \varphi \Big] \dx \dt = - \int_{\R^d} \vr_0 \varphi (0, \cdot) \dx
\end{equation}	
for any $\varphi \in C^1_c([0,T) \times \R^d)$. Here the velocity field
\[
\vu \in L^\infty(0,T; L^2(\Omega; \R^d)) \cap L^p(0,T; W^{1,p}_0 (\Omega; \R^d))
\]
has been set to be zero outside $\Omega$, and
\[
\vr(t, \cdot) = \left\{ \begin{array}{l} \vr_{n,N} \ \mbox{in}\ 
	\mathcal{S}_{n,N}(t), \\ 
\vr_{f}\ \mbox{if}\ x \in \Omega_{f,N}(t), \\
0 \ \mbox{in}\ \R^d \setminus \Omega.
 \end{array} \right.
\]

The velocity $\vu$ belongs to the class 
\begin{equation} \label{w2}
	\vu \in L^\infty(0,T; L^2(\Omega; \R^d)) \cap 
	L^p(0,T; W^{1,p}_0(\Omega; \R^d)), 
\end{equation}
and
\begin{equation} \label{w3}
	(\vu(t, \cdot) - \vc{w}_{n,N} (t, \cdot)) \in W^{1,p}_0( \R^d \setminus \mathcal{S}_{n,N}(t) ) \ {\mbox{for a.a.}\ t \in (0,T).}
	\end{equation}	

The momentum balance \eqref{i3} is rewritten in the form 
\begin{equation} \label{w4}
\int_0^T \int_{\R^d} \Big[ \vr \vu \cdot \partial_t \bfphi + [\vr \vu \otimes \vu]: \Ds \bfphi\Big] \dxdt = \int_0^T \int_{\R^d} \Big[ \mathbb{S} : \Grad \bfphi - \vr \vc{g} \cdot \bfphi \Big] \dxdt - \int_{\R^d} \vr_0 \vu_0{ \cdot \bfphi(0,\cdot)} \ \dx	
	\end{equation}
for any $\bfphi \in C^1_c([0,T) \times \Omega; \R^d)$, $\Div \bfphi = 0$,  
\begin{equation} \label{w5}
\Ds \bfphi (t, \cdot) = 0 \ \mbox{on an open neighbourhood of}\ \cup_{n=1}^N {\mathcal{S}_{n,N}(t)}. 
	\end{equation}
In addition, 
\begin{equation} \label{w6}
		\mathbb{S} \in \partial F(\Ds \vu) \ \mbox{a.a. in}\ (0,T) \times \Omega. 	
\end{equation}

Finally, the weak formulation includes the total energy balance 
\begin{equation} \label{w7}
\frac{1}{2} \intO{ \vr |\vu|^2 (\tau, \cdot) } + 
\int_0^\tau \intO{ \mathbb{S}: \Grad \vu }\ dt \leq 
\frac{1}{2} \intO{ \vr_0 |\vu_0|^2 ( \cdot) } + \int_0^\tau \intO{ \vr\vc{g} \cdot \vu } \dt 
	\end{equation}
for a.a. $\tau \in (0,T)$.

\subsection{Main result}

Having collected the preliminary material, we are ready to formulate our main result. 

%\begin{mdframed}[style=MyFrame]
	
\begin{Theorem} \label{Tw1}	

Let $\Omega \subset \R^d$, $d=2,3$ be a bounded domain of class $C^{2 + \nu}$.
Suppose the following hypotheses are satisfied:
\begin{itemize}
 \item
The potential $F$ is convex satisfying \eqref{i5}, where $p \geq d$. 
\item
The volume force $\vc{g}$ belongs to the class 
$\vc{g} \in L^\infty(\Omega; \R^d)$.
\item For a positive integer $N$, the initial configuration of the rigid bodies is represented 
by a family of {connected} compact sets $(\mathcal{S}_{n,N}(0))_{n=1}^N$ with diameters $(r_{n,N})_{n=1}^N$, 
\[
0 < r_{1,N} \leq \dots \leq r_{N,N}. 
\]
	
\item The (initial) mass density distribution 
\[
\vr_{0,N} = 
\mathds{1}_{\Omega_{f,N}(0)} \vr_{f}  +\sum_{n=1}^N \mathds{1}_{\mathcal{S}_{n,N}(0)} \vr_{n,N}
\]
satisfies  $\vr_{f}> 0,\ \vr_{n,N} > 0,\ n=1, \dots, N$, and
\begin{equation} \label{w9}
\| \vr_{0,N} \|^q_{L^q(\Omega)} = \vr_f^q |\Omega_{f,N} | + \sum_{n=1}^N \vr_{n,N}^q |\mathcal{S}_{n,N} | 
\aleq 1 \ \mbox{for some}\ q > 1
\end{equation}
uniformly for $N \to \infty$.

\item 
The rigid bodies are of positive volume (the d-dimensional Lebesgue measure), 
\begin{equation} \label{w10a}
	|\mathcal{S}_{n,N} | > 0 ,\ \mbox{with barycentres}\ \vc{h}_{n,N}(t) = \frac{1}{|\mathcal{S}_{n,N} |}\int_{\mathcal{S}_{n,N}(t)} \vc{x} \dx,\ n=1, \dots, N, 
	{\ t \geq 0.}
\end{equation} 
In addition, if $p = d$, we suppose there exists $\lambda > 0$ such that 
\begin{equation} \label{w10}
\lambda r_{n,N}^\beta \leq |\mathcal{S}_{n,N} | ,\ n = 1, \dots, N	
	\end{equation}  	
for some $\beta \geq d$.

\item The initial {data $(\vr_{0,N}, \vu_{0,N})_{N=1}^\infty$ satisfy
\begin{align} 
\vr_{0,N} &\to \Ov{\vr} \ \mbox{in}\ L^1(\Omega),\ \Ov{\vr} > 0 \ \mbox{constant}, \label{w11a} \\	
\vr_{0,N} \vu_{0,N} &\to \Ov{\vr} \vu_0 \ \mbox{weakly in}\ L^2(\Omega; \R^d), \label{w11}, \\
\intO{ \vr_{0,N} |\vu_{0,N}|^2 } &\to \intO{ \Ov{\vr} |\vu_0|^2 } \label{w11b}
\end{align}	
as $N \to \infty$.}  
  
\end{itemize}
	
Then there is a universal constant $A = A(p,q)$ such that (up to a subsequence) 
\begin{equation} \label{w12}
\vu_N \to \vu \ \mbox{weakly in}\ L^p(0,T; W^{1,p}_0(\Omega;\R^d)) 
\ \mbox{and strongly in}\ L^2((0,T) \times \Omega; \R^{d}), 	 
\end{equation}
where $\vu$ is a weak solution of the system 
\begin{align}
\Div \vu &= 0, \br	 
\Ov{\vr} \partial_t \vu + \Ov{\vr} \Div (\vu \otimes \vu) + \Grad \Pi &= \Div \mathbb{S} + \Ov{\vr} \vc{g}, 
\ \mathbb{S} \in \partial F(\Ds \vu),	\br
\vu(0, \cdot) &= \vu_0,
	\label{w13}
	\end{align}
whenever 
\begin{equation} \label{w14}
A^N{ {\rm vol}[N] }\to 0 \ \mbox{as}
 \ N \to \infty,\ \mbox{ where }{ {\rm vol}[N] \equiv \sum_{n=1}^N r_{n,N}^d.}
\end{equation}

If, in addition, the function $F$ is strictly convex, then 
\begin{equation} \label{w15}
	\vu_N \to \vu \ \mbox{strongly in}\ L^p(0,T; W^{1,p}_0(\Omega; \R^d)).		
\end{equation}	
\end{Theorem}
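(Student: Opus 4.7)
The overall strategy is to derive uniform-in-$N$ bounds from the energy inequality \eqref{w7}, extract weakly convergent subsequences, and pass to the limit in the weak formulation \eqref{w4} using the restriction operator from Section \ref{r}. From \eqref{w7} together with the coercivity in \eqref{i5}, Korn's inequality and the no-slip condition \eqref{i9}, I obtain
\[
\|\sqrt{\vr_N}\vu_N\|_{L^\infty(0,T;L^2)} + \|\vu_N\|_{L^p(0,T;W^{1,p}_0(\Omega;\R^d))} + \|F^*(\mathbb{S}_N)\|_{L^1((0,T)\times\Omega)} \lesssim 1,
\]
where $F^*$ is the Legendre--Fenchel conjugate and $\mathbb{S}_N \in \partial F(\Ds \vu_N)$ yields $\mathbb{S}_N$ bounded in $L^{p'}$. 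The transport equation \eqref{w1} preserves $L^q$ norms, so \eqref{w9} gives $\vr_N$ bounded in $L^\infty(0,T;L^q(\Omega))$. Since $\sum_n|\mathcal S_{n,N}|\lesssim \text{vol}[N]\to 0$, the hypothesis \eqref{w11a} together with the transport equation forces $\vr_N\to\bar\vr$ in, say, $C_\text{weak}([0,T];L^q)$, and in particular a.e.\ on shrinking fluid domains.

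The key step is testing \eqref{w4} with $R_N[\bfphi]$, where $\bfphi\in C^1_c([0,T)\times\Omega;\R^d)$ is divergence-free and $R_N$ is the restriction operator of Section \ref{r} producing a rigid velocity on each $\mathcal S_{n,N}(t)$ and coinciding with $\bfphi$ far from the bodies. The properties of $R_N$ provide an estimate of the form
\[
\|R_N[\bfphi]-\bfphi\|_{L^p(0,T;W^{1,p}(\Omega;\R^d))}^p \leq C(\bfphi)\, A^N\,\mathrm{vol}[N],
\]
the exponential factor $A^N$ arising from iterating the local cut-off construction across layers of bodies and from the capacitary estimates on $p$-capacity in the critical/supercritical regime $p\geq d$ (this is exactly where the hypothesis $|\mathcal S_{n,N}|\geq\lambda r_{n,N}^\beta$ in \eqref{w10} is used to control the $p=d$ threshold). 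Under the assumption \eqref{w14}, every term involving $R_N[\bfphi]-\bfphi$ vanishes in the limit. Combined with strong $L^2$ compactness of $\vu_N$ (obtained from Aubin--Lions applied to the momentum equation to get equicontinuity in time), one passes to the limit in the convective term $\vr_N\vu_N\otimes\vu_N \to \bar\vr\,\vu\otimes\vu$ and recovers the limit momentum equation with a weak limit $\overline{\mathbb{S}}$ in place of $\partial F(\Ds\vu)$.

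The main obstacle is the identification $\overline{\mathbb S}\in \partial F(\Ds\vu)$, since the constitutive relation \eqref{w6} is nonlinear and multivalued. My plan is to exploit the Fenchel--Young identity, valid exactly on the graph of $\partial F$:
\[
\mathbb S_N:\Ds\vu_N = F(\Ds\vu_N)+F^*(\mathbb S_N)\quad\text{a.e.}
\]
Passing to the $\limsup$ in the energy inequality \eqref{w7} using the convergences above (and the convergence \eqref{w11b} of initial kinetic energies), I get
\[
\limsup_{N\to\infty}\int_0^\tau\!\!\int_\Omega\!\bigl(F(\Ds\vu_N)+F^*(\mathbb S_N)\bigr)\,\dx\dt \leq \tfrac12\!\int_\Omega\!\bar\vr|\vu_0|^2 -\tfrac12\!\int_\Omega\!\bar\vr|\vu(\tau)|^2+\!\int_0^\tau\!\!\int_\Omega\!\bar\vr\,\vc g\cdot\vu\,\dx\dt.
\]
Testing the already-derived limit momentum equation with $\vu$ itself (which is legitimate after a standard regularization, using $\vu\in L^p(0,T;W^{1,p}_0)$ and the constant limiting density) identifies the right-hand side with $\int_0^\tau\!\!\int \overline{\mathbb S}:\Ds\vu$. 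Lower semicontinuity of the convex functionals $\int F(\Ds\vu_N)$ and $\int F^*(\mathbb S_N)$ (weak $L^p$ and $L^{p'}$ respectively) then forces equality and, via Young's inequality, the pointwise identity $\overline{\mathbb S}:\Ds\vu = F(\Ds\vu)+F^*(\overline{\mathbb S})$, i.e.\ $\overline{\mathbb S}\in\partial F(\Ds\vu)$.

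Finally, under strict convexity of $F$, the convergence of $\int F(\Ds\vu_N)$ to $\int F(\Ds\vu)$ upgrades to a.e.\ convergence of $\Ds\vu_N$ on a subsequence by a Visintin-type argument for convex integrands, and combined with the $L^p$ norm convergence this yields the strong convergence \eqref{w15} in $L^p(0,T;W^{1,p}_0)$ via Korn's inequality. The delicate point throughout is the interplay between the capacitary factor $A^N\text{vol}[N]$ in the restriction operator and the $L^{p'}$ bound on $\mathbb S_N$: a naive estimate would require $L^\infty$ control of the stress, and it is precisely the Fenchel--Young reformulation that allows the capacitary smallness to compensate the nonlinearity.
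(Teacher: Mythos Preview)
Your outline captures the overall architecture correctly---uniform bounds, restriction operator, Aubin--Lions, Fenchel--Young identification---and in particular the identification $\overline{\mathbb S}\in\partial F(\Ds\vu)$ via the energy equality for the limit system is exactly the paper's argument. However, there is one genuine gap.

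You treat the restriction operator as if it were time-independent, writing only the spatial error bound $\|R_N[\bfphi]-\bfphi\|_{W^{1,p}}^p\lesssim A^N\mathrm{vol}[N]$. But the rigid bodies move, so $R_N=\vc{R}_N(\vc{h}_{1,N}(t),\dots,\vc{h}_{N,N}(t))$ depends on $t$ through the barycentres. When you plug $R_N[\bfphi]$ into \eqref{w4}, the term $\vr_N\vu_N\cdot\partial_t(R_N[\bfphi])$ produces, in addition to $R_N[\partial_t\bfphi]$, a commutator
\[
\sum_{n=1}^N \nabla_{\vc{h}_n}\vc{R}_N(\vc{h}_{1,N},\dots,\vc{h}_{N,N})[\bfphi]\cdot \vc{Y}_{n,N}(t),\qquad \vc{Y}_{n,N}=\frac{\D}{\dt}\vc{h}_{n,N},
\]
cf.\ \eqref{formula}, \eqref{error:time}. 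Controlling this requires a bound on the rigid translation velocities $\vc{Y}_{n,N}$, and \emph{this} is where hypothesis \eqref{w10} actually enters: from the compatibility \eqref{w3} and the $W^{1,p}$ bound on $\vu_N$ one extracts $\|\vc{Y}_{n,N}\|_{L^p(0,T)}\lesssim |\mathcal{S}_{n,N}|^\alpha$ (see \eqref{rigid}), with $\alpha>0$ when $p=d$ precisely because of the lower volume bound \eqref{w10}. Your attribution of \eqref{w10} to the ``capacitary'' spatial error estimate is incorrect; that estimate \eqref{errorW1p} uses only the radii and the Bogovskii constant. The same commutator issue lurks behind the Aubin--Lions step: the time-equicontinuity of $\int_\Omega\vr_N\vu_N\cdot\phi$ is obtained from the momentum equation tested against $R_N[\phi]$, so the body-velocity terms must be handled there too. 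Without this piece the proof is incomplete.
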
	
	
%	\end{mdframed}
	
\begin{Remark} \label{wR1}
In many physically relevant examples of the potential $F$, the limit problem \eqref{w13} admits a unique weak solution, see e.g. Breit \cite{BreitB} ,  
Lady\v{z}enskaja \cite{Lady67}, 
M\' alek et al. \cite{BlMaRa}, 
\cite{BMR1}, \cite{MNRR}, among others. In this case, the limit is unconditional, meaning there is no need to pass to subsequences.  
	\end{Remark}
	
{
	\begin{Remark} \label{wwrr}
In view of \eqref{w9}, \eqref{w14}, it is easy to see that $\Ov{\vr} = \vr_f$.		
		\end{Remark}		
}	
\begin{Remark} \label{wR1a}
{Under the hypotheses of Theorem \ref{Tw1}, there exists a family of open balls, 
\[
B_{r_{n,N}}(\vc{h}_{n,N}(t)) \equiv \left\{ x \in \R^d \ \Big|\ |x - \vc{h}_{n,N}(t) | < r_{n,N} \right\}, 		
\]
such that
\begin{align} 
	\mathcal{S}_{n,N}(t) \subset B_{r_{n,N}}(\vc{h}_{n,N}(t))\ n=1, \dots, N,\ t \geq 0.
	\label{w8}	
\end{align} 
Indeed, by virtue of the sharp form of Jensen's inequality, 
\begin{align}
|\vc{y} - \vc{h}_{n,N}(t)|^2 =
\left| \frac{1}{|\mathcal{S}_{n,N}(t)|} \int_{\mathcal{S}_{n,N}(t)} (\vc{y} - x ) \dx    \right|^2 < \frac{1}{|\mathcal{S}_{n,N}(t)|} \int_{\mathcal{S}_{n,N}(t)} |\vc{y} - x |^2 \dx \leq r_{n,N}^2
\nonumber
\end{align}
whenever $\vc{y} \in \mathcal{S}_{n,N}$. Accordingly, the quantity
\[
{\rm vol}[N] = \sum_{n=1}^N r_{n,N}^d 
\]		
can be seen as a packing volume with respect to the balls $B_{r_{n,N}}(\vc{h}_{n,N})$,
and, in accordance with the hypotheses \eqref{w10a}, \eqref{w10}, can be considerably larger then the total volume of the rigid bodies 
$\sum_{n=1}^N |\mathcal{S}_{n,N}|$. Nonetheless, these two quantities can be of the same order if the rigid bodies are balls centred at $\vc{h}_{n,N}$.}
	\end{Remark}	

\begin{Remark} \label{wR2}
If $d = p =2$, the result includes the Newtonian case $F(\mathbb{D}) = \mu |\mathbb{D}|^2$. It is worth noting that the same scaling 
as in \eqref{w14} yields a different \emph{homogenization} limit with a Brinkmann friction term in the limit system, see e.g. Allaire \cite{Allai4}.
{Moreover, the strong convergence claimed in \eqref{w15} fails in the homogenization limit.}
\end{Remark}

\begin{Remark} \label{wR3}
{If $p>d$, 
the velocity fields $\vu_N$ belonging to the Sobolev space	
$W^{1,p}$ are uniformly continuous for any fixed time $t \geq 0$. In particular, the $p-$capacity of a \emph{single} point is positive. Consequently, any homogenization process requiring the velocity to vanish on each rigid 
body would necessarily give rise to $\vu_N \to 0$, 
meaning the fluid motion would stop in the asymptotic limit. 
}

\end{Remark}

\section{Restriction operator}
\label{r}

Selecting an appropriate restriction operator holds paramount importance in our analysis. Diverging from the predominant approach found in the existing literature, which typically involves modifying test functions to vanish on the body, we leverage the flexibility granted by condition \eqref{w5}. Instead, we substitute the function on a ball with its integral average over that same ball. This approach has previously been employed in \cite[Section 4.1]{FeiRoyZar2022}, where comprehensive proofs of the following assertions can be found. 

Consider a function \begin{align} 
H &\in C^\infty(\mathbb{R}), \ 0 \leq H(Z) \leq 1,\ H'(Z) = H'(1- Z) \ \mbox{for all}\ Z \in \mathbb{R}, \br
H(Z) &= 0 \ \mbox{for} \ - \infty < Z \leq \frac{1}{4},\ 
H(Z) = 1 \ \mbox{for}\ \frac{3}{4} \leq Z < \infty
\nonumber
\end{align}

{For $\varphi \in {L^1_{\rm loc}(\R^d)}$ , $r > 0$ we define  $E_r$,}
\begin{align}\label{def:E}
E_r [\varphi] (x) = 
\frac{1}{|B_r(0)  |} \int_{B_r(0)} \varphi \ dz \ H \left( 2 - \frac{|x|}{r} \right)
+ \varphi(x) H \left( \frac{ | x |}{r} - 1 \right).
\end{align}
{where $B_r(0) $ denotes the ball centred at zero with the radius $r > 0$.} Obviously, the operator 
$E_r$ maps the space $\DC(\R^d)$ into itself, and, moreover 
\begin{align} 
	\| E_r [\varphi] \|_{L^p(\R^d)} &\aleq \| \varphi \|_{L^p(\R^d)}, 
	\label{T3} \\ 
\| \Grad E_r [\varphi] \|_{L^p(\R^d; \R^d)} &\aleq \| \Grad \varphi \|_{L^p(\R^d; \R^d)}
\ \mbox{for any}\ 1 \leq p \leq \infty,	
\label{T4}
	\end{align}
uniformly for $0 < r \leq 1$, see \cite[Section 4.1]{FeiRoyZar2022}.

The operator $E_r$ fails to maintain solenoidality when applied componentwise to a solenoidal function. To address this issue, we introduce a new operator:
\begin{equation} \label{E6}
	{\vc{R}_r} [\bphi ] = E_r [\bphi] - \mathcal{B}_{2 r, r} \Big[ 
	\Div E_r [\bphi]|_{B_{2r} \setminus {B}_{r}} \Big],
	\end{equation}
where $\mathcal{B}_{2 r,  r}$ is a suitable branch of the inverse of the divergence operator defined on the 
annulus $B_{2r}(0) \setminus B_{r}(0)$. One potential formulation of $\mathcal{B}$ was initially suggested by Bogovskii \cite{BOG}, further developed by Galdi \cite{GALN}, and subsequently expanded upon by Diening et al. \cite{DieRuzSch}, Gei{\ss}ert et al. \cite{GEHEHI} among others. In our context, the properties of the operator $\mathcal{B}_{2r,r}$ can be derived through a scaling argument as explained in \cite[Section 4.1]{FeiRoyZar2022}.
	
{For $\vc{h} \in \R^d$,  we set}	 
\begin{equation} \label{E10}
	\vc{R}_r (\vc{h} )[\bphi] = S_{- \vc{h}} \vc{R}_r \Big[ S_{\vc{h}} [\bphi] \Big],
	\end{equation}
where $S_{\vc{h}}$ is the shift operator given by 
\[
S_{\vc{h}} [f] (x) = f ( \vc{h} + x ).
\]

{The basic properties of the operator $\vc{R}_r(\vc{h})$ are summarized below, 
	cf. also \cite[Proposition 5.1] {FeiRoyZar2022}. }

\begin{Proposition} \label{EP1}
	
	The operator $\vc{R}_r (\vc{h})$ is well defined for any function $\bfphi$ in the class  
	\[
	\bfphi  \in \DC(\R^d; \R^d), \ \Div \bfphi = 0
	\]
and can be uniquely extended to functions 
\[
\bfphi \in L^p_{\rm loc}(\R^d; \R^d),\ \Div \bfphi = 0 \ a.a.
\]	

Moreover, the following holds:

\begin{itemize}
	
	\item
	
\begin{equation} \label{reg1}
	\bfphi \in C^\infty (\R^d; \R^d) \ \Rightarrow \ \vc{R}_r (\vc{h})[\bfphi] \in C^\infty(\R^d; \R^d);  
\end{equation}	

\item 
\begin{equation} \label{E13}
	\Div \vc{R}_r (\vc{h})[\bphi] = \Div \bphi = 0 ;
\end{equation}	
	 
	\item 
\begin{equation} \label{E12}
\vc{R}_r (\vc{h})[\bphi] = \left\{ \begin{array}{l} \frac{1}{|{B}_{r}(\vc{h})|} 
\int_{{B}_{r}(\vc{h})} \bphi \dx \ \mbox{if}\ |x - \vc{h}| < r,  \\ 
\\ 
\bphi (x) \ \mbox{if}\ |x - \vc{h}| > 2 r;  \end{array}	\right. 
	\end{equation} 

\item 

\begin{equation} \label{E12a}
\vc{R}_r (\vc{h})	[\bfphi] = \bfphi \ \mbox{whenever}\ 
\bfphi \ \mbox{is a constant vector on}\ B_{2r}(\vc{h});
	\end{equation}

\item

\begin{equation} \label{E14bis}
	\left\| \vc{R}_r (\vc{h})[\bphi] \right\|_{L^p(\R^d; \R^{d})} \aleq 
	\| \bphi\|_{L^{p}(\R^d; \R^{d})}
\end{equation}
\begin{equation} \label{E14}
\left\| \Grad \vc{R}_r (\vc{h})[\bphi] \right\|_{L^p(\R^d; \R^{d \times d})} \aleq 
\| \Grad \bphi\|_{L^{p}(\R^d; \R^{d\times d})}
\end{equation} 
for any $1 < p < \infty$ independently of $0 < r \leq 1$;
\item  If $\bphi$ is compactly supported, then so is {$\vc{R}_r (\vc{h})[\bphi]$}. 
Specifically, 
 \begin{equation} \label{E15}
{\rm supp}[ \vc{R}_r (\vc{h})[\bphi] ] \subset \Ov{ \mathcal{U}_{2r}[ {\rm supp}[\bfphi ]] }
\end{equation}
where $\mathcal{U}_{2r}(O)$ denotes the $2r-$neighbourhood of a set $O$.
\item

\begin{equation} \label{E14bisnew}
	\left\| \vc{R}_r (\vc{h})[\bphi] -\bphi \right\|_{L^p(\R^d; \R^{d})} \leq c(p) 
	\| \bphi\|_{L^{p}(B_{2r}(\vc{h}); \R^{d})}
\end{equation}
\begin{equation} \label{E14new}
\left\| \Grad \vc{R}_r (\vc{h})[\bphi] -\Grad\bphi\right\|_{L^p(\R^d; \R^{d \times d})} \leq  c(p)
\| \Grad \bphi\|_{L^{p}(B_{2r}(\vc{h}); \R^{d\times d})}
\end{equation} 
for any $1 < p < \infty$ independently of $0 < r \leq 1$.
\end{itemize}
\end{Proposition}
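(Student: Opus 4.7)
\textbf{Proof plan for Proposition \ref{EP1}.}

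Since every claim is translation-invariant, I reduce throughout to $\vc{h}=\vc{0}$ and work with $\vc{R}_r = E_r - \mathcal{B}_{2r,r}\circ\Div\circ E_r$. The plan rests on three ingredients: the estimates \eqref{T3}, \eqref{T4} for $E_r$; the classical Bogovski\u{\i} bounds which, pulled back to the fixed unit annulus $B_2(0)\setminus B_1(0)$, give the scale-invariant inequalities $\|\mathcal{B}_{2r,r}[g]\|_{L^p}\aleq r\|g\|_{L^p}$ and $\|\Grad\mathcal{B}_{2r,r}[g]\|_{L^p}\aleq \|g\|_{L^p}$ uniformly in $0<r\leq 1$; and the partition identity $H(Z)+H(1-Z)=1$, obtained by integrating the assumed symmetry $H'(Z)=H'(1-Z)$ and using $H(0)=0$, $H(1)=1$.

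Using this identity one rewrites $E_r[\bphi] = \bphi + (\langle\bphi\rangle_{B_r}-\bphi)\,H_1$ with $H_1(x):=H(2-|x|/r)$. Since $H_1\equiv 1$ on $B_r$, $H_1\equiv 0$ outside $B_{2r}$, and the range of $\mathcal{B}_{2r,r}$ is supported in the annulus, the pointwise formula \eqref{E12} is immediate. Property \eqref{E12a} then follows because $\bphi\equiv\vc{c}$ on $B_{2r}$ forces $\langle\bphi\rangle_{B_r}=\vc{c}$, killing both the correction term and the Bogovski\u{\i} input. Smoothness \eqref{reg1} is a routine composition statement. For \eqref{E13}, the function $\vc{R}_r[\bphi]$ is constant on $B_r$, equals $\bphi$ outside $B_{2r}$, and on the annulus the Bogovski\u{\i} term is designed to cancel $\Div E_r[\bphi]$; the required compatibility $\int_{B_{2r}\setminus B_r}\Div E_r[\bphi]\dx=0$ I would verify by applying the divergence theorem with traces $\bphi$ on $\partial B_{2r}$ and $\langle\bphi\rangle_{B_r}$ on $\partial B_r$, together with $\Div\bphi=0$. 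The support statement \eqref{E15} is then a consequence of \eqref{E12}, and the global bounds \eqref{E14bis}, \eqref{E14} combine \eqref{T3}, \eqref{T4} with the scale-invariant Bogovski\u{\i} estimates applied to $\Div E_r[\bphi]$.

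The only point requiring genuine computation is the pair of local difference estimates \eqref{E14bisnew}, \eqref{E14new}. By \eqref{E12}, $\vc{R}_r[\bphi]-\bphi$ is supported in $B_{2r}$, and the splitting
\[
\vc{R}_r[\bphi]-\bphi = (\langle\bphi\rangle_{B_r}-\bphi)\,H_1 - \mathcal{B}_{2r,r}\bigl[\Div E_r[\bphi]\bigr]
\]
reduces matters to estimating each summand on $B_{2r}$. A direct computation on the annulus, using $\Div\bphi=0$, gives
\[
\Div E_r[\bphi] = -\frac{H'(2-|x|/r)}{r}\,(\langle\bphi\rangle_{B_r}-\bphi)\cdot\frac{x}{|x|},
\]
so $\|\Div E_r[\bphi]\|_{L^p}\aleq r^{-1}\|\bphi-\langle\bphi\rangle_{B_r}\|_{L^p(B_{2r})}$, which I bound either crudely by $r^{-1}\|\bphi\|_{L^p(B_{2r})}$ (for \eqref{E14bisnew}, where the $r$ from $\|\mathcal{B}_{2r,r}\|$ swallows the $r^{-1}$) or, via Poincar\'e--Wirtinger $\|\bphi-\langle\bphi\rangle_{B_r}\|_{L^p(B_{2r})}\aleq r\|\Grad\bphi\|_{L^p(B_{2r})}$, by $\|\Grad\bphi\|_{L^p(B_{2r})}$ (for \eqref{E14new}, where the Bogovski\u{\i} gradient estimate is already $r$-free). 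The first summand $(\langle\bphi\rangle_{B_r}-\bphi)H_1$ is handled analogously: its gradient involves $\nabla H_1$, whose $r^{-1}$ singularity is absorbed by the same Poincar\'e factor. The main obstacle is the careful bookkeeping of powers of $r$: every singular $r^{-1}$ introduced by differentiating the cutoff $H(2-|x|/r)$ must be exactly matched by the $r$ supplied either by Bogovski\u{\i} or by Poincar\'e--Wirtinger, which is the reason both families of inequalities are used in their scale-invariant form.
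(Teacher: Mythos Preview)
Your proof is correct and complete. The paper itself does not prove Proposition~\ref{EP1}; it simply refers the reader to \cite[Proposition~5.1]{FeiRoyZar2022} and adds, in the remark following the statement, that the localized error bounds \eqref{E14bisnew}--\eqref{E14new} ``follow directly from the construction of the operator $\vc{R}_r(\vc{h})$''. What you have done is exactly to supply that construction-based argument in full, and your key steps---the partition identity $H(Z)+H(1-Z)=1$, the rewriting $E_r[\bfphi]=\bfphi+(\langle\bfphi\rangle_{B_r}-\bfphi)H_1$, the explicit formula for $\Div E_r[\bfphi]$, and the scale-invariant pairing of the $r^{-1}$ from $\nabla H_1$ against the $r$ from Bogovski\u{\i}/Poincar\'e--Wirtinger---are precisely the right ones. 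The only cosmetic point is that your Poincar\'e--Wirtinger step uses the mean over $B_r$ rather than $B_{2r}$, which costs an extra harmless factor $(1+2^{d/p})$; this is absorbed into $c(p)$ and does not affect the conclusion.
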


 \begin{Remark}
Strictly speaking, the relations \eqref{E14bisnew}--\eqref{E14new} formulated in \cite[Proposition 5.1] {FeiRoyZar2022}
contain the norm $L^p(\R^d)$ on the right--hand side. Their local form stated here
follows directly from the construction of the operator $\vc{R}_r (\vc{h})$.
\end{Remark}

Finally, we evaluate the differential of $\nabla_{\vc{h}} \vc{R}_r (\vc{h})[\bfphi]$ for 
	a given function $\bfphi$: 	
\begin{equation} \label{formula}
\nabla_{\vc{h}} \vc{R_r}(\vc{h}) [\bfphi ] = \Grad \left( \vc{R}_r (\vc{h}) [\bfphi] \right) - 
\vc{R}_r (\vc{h}) [ \Grad \bfphi ] .	
	\end{equation}
Note carefully that if $\bfphi$ is solenoidal, meaning $\Div\bfphi = 0$, then so is 
$\Grad \bfphi$ (component--wise) and the right--hand side of \eqref{formula} is well defined.
Using the bounds \eqref{E14bis}, \eqref{E14}, the formula \eqref{formula} can be extended to solenoidal functions 
$\bfphi \in W^{1,p}_{\rm loc}(\R^d; \R^d)$ by density argument. {Finally, note that
\begin{equation} \label{formula1}
\nabla_{\vc{h}} \vc{R_r}(\vc{h}) [\bfphi ] = 0 \ \mbox{in}\ \R^d \setminus B_{2r}(\vc{h}).
\end{equation}
}
\subsection{Composition}
Recall that the radii $r_{n,N}$, $n=1,\dots, N$ of $N$ rigid bodies satisfy
\begin{equation*}
0 < r_{1,N} \leq r_{2,N} \leq \dots \leq r_{N,N}.
\end{equation*}
As we handle the {several body problem where mutual collisions are not excluded}, it is convenient to consider the composition of restriction operators
\begin{equation} \label{CC1}
\vc{R}_N (\vc{h}_{1,N}, \cdots \vc{h}_{N,N}) [\bfphi]  
= \vc{R}_{r_{1,N}} (\vc{h}_{1,N} ) \circ \vc{R}_{5r_{2,N}} (\vc{h}_{2,N}) \circ \cdots \circ \vc{R}_{5^{N-1}r_{N,N}} (\vc{h}_{N,N}) [\bfphi]	
	\end{equation}
for arbitrary $(\vc{h}_{1,N}, \cdots, \vc{h}_{N,N})$. In view of the property \eqref{E13}, the operator 
$\vc{R}_N (\vc{h}_{1,N}, \cdots \vc{h}_{N,N}) [\bfphi]$ is well defined for any solenoidal $\bfphi$ and 
\[
\Div \vc{R}_N (\vc{h}_{1,N}, \cdots \vc{h}_{N,N}) [\bfphi] = 0.
\] 

{The following crucial result is a slight modification of \cite[Lemma 4.2]{FeiRoyZar2022}.}

\begin{Lemma} \label{Lr1}
Suppose $\vc{h}_{1,N}, \dots, \vc{h}_{1,N}$ are points in $\R^d$, and 
\[
0 < r_{1,N} \leq r_{2,N} \leq \dots, \leq r_{N,N} 
\]
positive radii. Then 
\begin{equation} \label{r1}
	\vc{R}_N (\vc{h}_{1,N}, \cdots \vc{h}_{N,N}) = \Lambda_{n} \ \mbox{on}\ B_{r_{n,N}}(\vc{h}_{n,N}),\ n = 1, \dots, N,  
	\end{equation}	
where $\Lambda_n$, $n=1,\dots, N$ are constants. 	
	
	\end{Lemma}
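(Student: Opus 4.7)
The plan is to fix $n \in \{1, \dots, N\}$ and prove, by a downward induction on $m$, that after applying the last $N-m+1$ operators the result is already constant on a ball centred at $\vc{h}_{n,N}$ that is just large enough to absorb the influence of the remaining operators $R_1, \dots, R_{m-1}$. The factor $5$ in the radii appearing in the definition \eqref{CC1} is designed precisely to make this telescoping work.

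Write $R_k = \vc{R}_{5^{k-1} r_{k,N}}(\vc{h}_{k,N})$ and, for $m = 1, \dots, N+1$, set $\Psi_m = R_m \circ R_{m+1} \circ \cdots \circ R_N [\bfphi]$ (so $\Psi_{N+1} = \bfphi$ and $\Psi_1 = \vc{R}_N[\bfphi]$). The induction statement I will prove for the fixed $n$ is
\begin{equation*}
\Psi_m \ \text{is constant on} \ B_{5^{m-1} r_{n,N}}(\vc{h}_{n,N}), \qquad m = n, n-1, \dots, 1.
\end{equation*}
The base case $m = n$ is immediate from property \eqref{E12} applied to $R_n$, which forces $\Psi_n = R_n[\Psi_{n+1}]$ to equal the average of $\Psi_{n+1}$ (a constant vector) throughout $B_{5^{n-1} r_{n,N}}(\vc{h}_{n,N})$. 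Setting $m=1$ in the conclusion gives exactly the required constancy on $B_{r_{n,N}}(\vc{h}_{n,N})$, yielding the constant $\Lambda_n$.

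For the inductive step, assume $\Psi_m$ equals a constant $c$ on $B_{5^{m-1} r_{n,N}}(\vc{h}_{n,N})$ and consider $\Psi_{m-1} = R_{m-1}[\Psi_m]$. Take $x \in B_{5^{m-2} r_{n,N}}(\vc{h}_{n,N})$. If $x \notin B_{2 \cdot 5^{m-2} r_{m-1,N}}(\vc{h}_{m-1,N})$, then $\Psi_{m-1}(x) = \Psi_m(x) = c$ by \eqref{E12}. Otherwise the two balls intersect, and the key geometric claim is that in this case
\begin{equation*}
B_{2 \cdot 5^{m-2} r_{m-1,N}}(\vc{h}_{m-1,N}) \subset B_{5^{m-1} r_{n,N}}(\vc{h}_{n,N}).
\end{equation*}
To see this, pick $z$ in the intersection and note that for any $y \in B_{2 \cdot 5^{m-2} r_{m-1,N}}(\vc{h}_{m-1,N})$, the triangle inequality and the monotonicity $r_{m-1,N} \leq r_{n,N}$ (using $m-1 < n$) give
\begin{equation*}
|y - \vc{h}_{n,N}| < 2 \cdot 5^{m-2} r_{m-1,N} + 2 \cdot 5^{m-2} r_{m-1,N} + 5^{m-2} r_{n,N} \leq 5 \cdot 5^{m-2} r_{n,N} = 5^{m-1} r_{n,N}.
\end{equation*}
Consequently $\Psi_m$ is constant (equal to $c$) on $B_{2 \cdot 5^{m-2} r_{m-1,N}}(\vc{h}_{m-1,N})$, so property \eqref{E12a} yields $R_{m-1}[\Psi_m] = \Psi_m$, hence $\Psi_{m-1}(x) = c$. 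This closes the induction.

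The only genuinely non-trivial step is the geometric inclusion above; the rest is a mechanical propagation through the properties \eqref{E12} and \eqref{E12a} of the restriction operator. The radius scaling by $5$ is exactly what is needed so that the triangle-inequality estimate $4 \cdot 5^{m-2} + 5^{m-2} = 5^{m-1}$ closes, even in the worst case where $r_{m-1,N} = r_{n,N}$ and the two balls just barely touch.
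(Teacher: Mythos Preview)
Your proof is correct and the geometric inclusion is verified cleanly; the factor $5$ indeed closes the estimate exactly as you say.

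The organization, however, differs from the paper's. The paper argues by induction on $N$: it observes that the composition $\vc{R}_{5r_{2,N}}(\vc{h}_{2,N})\circ\cdots\circ\vc{R}_{5^{N-1}r_{N,N}}(\vc{h}_{N,N})$ is itself an $(N-1)$--body restriction operator with radii $5r_{2,N}\leq\dots\leq 5r_{N,N}$, so by the induction hypothesis it is already constant on each $B_{5r_{i,N}}(\vc{h}_{i,N})$, $i\geq 2$. A single dichotomy on the position of $\vc{h}_{1,N}$ (either $B_{2r_{1,N}}(\vc{h}_{1,N})$ lies inside some $B_{5r_{i,N}}(\vc{h}_{i,N})$, or $B_{2r_{1,N}}(\vc{h}_{1,N})$ is disjoint from every $B_{r_{i,N}}(\vc{h}_{i,N})$) then finishes the argument. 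By contrast, you fix $n$ and run a downward induction on the stage $m$ of the composition, tracking a ball of constancy around $\vc{h}_{n,N}$ whose radius shrinks by the factor $5$ at each step. Your route is more self--contained and makes the role of the telescoping radii $5^{m-1}r_{n,N}$ completely transparent; the paper's route is more structural, reducing the $N$--body statement wholesale to the $(N-1)$--body one. Both hinge on the same two properties \eqref{E12}, \eqref{E12a} and on essentially the same geometric inequality.
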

	
\begin{proof}
The case $N=1$ is obvious. We proceed by induction. Our induction hypothesis is that \eqref{r1} holds for $N-1$, which means
\begin{equation} \label{r1a}
	\vc{R}_{r_{1,N}} (\vc{h}_{1,N} ) \circ \vc{R}_{5r_{2,N}} (\vc{h}_{2,N}) \circ \cdots \circ \vc{R}_{5^{N-2}r_{N-1,N}} (\vc{h}_{N-1,N})= \Lambda_{i} \ \mbox{on}\ B_{r_i}(\vc{h}_{i,N}),\ i = 1, \dots, N-1.  
	\end{equation}
 It is equivalent to
 \begin{equation} \label{r1b}
	 \vc{R}_{5r_{2,N}} (\vc{h}_{2,N}) \circ \cdots \circ \vc{R}_{5^{N-1}r_{N,N}} (\vc{h}_{N,N})= \Lambda_{i} \ \mbox{on}\ B_{5r_i}(\vc{h}_{i,N}),\ i = 2, \dots, N.  
	\end{equation}	
We consider two complementary cases:

\noindent
{\bf Case  (i):}
 
\begin{equation}\label{case1}
|\vc{h}_{1,N} - \vc{h}_{i,N}| + 2 r_{1,N} \leq 5 r_{i,N} \ \mbox{for some}\ i \in \{ 2, \dots, N\}.
\end{equation}
In view of the induction hypothesis \eqref{r1b} and the relation \eqref{case1}, we obtain
\begin{align} \label{r2}
 \vc{R}_{5 r_{2,N}} (\vc{h}_{2,N}) \circ \dots 
\circ \vc{R}_{5^{N-1} r_{N,N}}(\vc{h}_{N,N}) = \Lambda_i \ \mbox{on}\ B_{2 r_{1,N}}(\vc{h}_{1,N}) \ \mbox{for some}\ i \in \{ 2, \dots, N\}.
\end{align}
The relation \eqref{r2} and properties \eqref{E12}, \eqref{E12a} imply
\[
\vc{R}_{r_{1,N}}(\vc{h}_{1,N}) \circ \vc{R}_{5 r_{2,N}} (\vc{h}_{2,N}) \circ \dots 
\circ \vc{R}_{5^{N-1} r_{N,N}}(\vc{h}_{N,N}) = \vc{R}_{5 r_{2,N}} (\vc{h}_{2,N}) \circ \dots 
\circ \vc{R}_{5^{N-1} r_{N,N}}(\vc{h}_{N,N}), 
\]
and the desired conclusion \eqref{r1} follows from \eqref{r2}.

\noindent
{\bf Case  (ii):}
\[
|\vc{h}_{1,N} - \vc{h}_{i,N}| + 2 r_{1,N} > 5 r_{i,N} \ \mbox{for all}\ i = 2, \dots, N.
\]
This yields 
\[
|\vc{h}_{1,N} - \vc{h}_{i,N}| > r_{i,N} + 2 r_{1,N}, 
\]
and, consequently by the property \eqref{E12a} and induction hypothesis \eqref{r1b}, we have
\[
\vc{R}_{r_{1,N}}(\vc{h}_{1,N}) \circ \vc{R}_{5 r_{2,N}} (\vc{h}_{2,N}) \circ \dots 
\circ \vc{R}_{5^{N-1} r_{N,N}}(\vc{h}_{N,N}) = \vc{R}_{5 r_{2,N}} (\vc{h}_{2,N}) \circ \dots 
\circ \vc{R}_{5^{N-1} r_{N,N}}(\vc{h}_{N,N}) = \Lambda_i, 
\]
{on} $B_{r_{i,N}}(\vc{h}_{i,N})$ for any $i=2,\dots, N$.
	
	\end{proof}	

Finally, we use Lemma \ref{Lr1} together with \cite[Proposition 4.3]{FeiRoyZar2022} to deduce the following properties of the operator $\vc{R}_N (\vc{h}_{1,N}, \cdots \vc{h}_{N,N})$.

\begin{Proposition} \label{EP1R}
	
	The operator $\vc{R}_N (\vc{h}_{1,N}, \cdots \vc{h}_{N,N})$ is well defined for any function $\bfphi$ in the class  
	\[
	\bfphi  \in \DC(\R^d; \R^d), \ \Div \bfphi = 0
	\]
	and can be uniquely extended to functions 
	\[
	\bfphi \in L^p_{\rm loc}(\R^d; \R^d),\ \Div \bfphi = 0 \ a.a.
	\]	
	
	Moreover, the following holds:
	
	\begin{itemize}
		
		\item
		
		\begin{equation} \label{reg1R}
			\bfphi \in C^\infty (\R^d; \R^d) \ \Rightarrow \ \vc{R}_N (\vc{h}_{1,N}, \cdots \vc{h}_{N,N})[\bphi] \in C^\infty(\R^d; \R^d);  
		\end{equation}	
		
		\item 
		\begin{equation} \label{E13R}
			\Div \vc{R}_N (\vc{h}_{1,N}, \cdots \vc{h}_{N,N}) [\bphi] = \Div \bphi = 0 ;
		\end{equation}	
		
		\item 
		\begin{equation} \label{E12R}
			\vc{R}_N (\vc{h}_{1,N}, \cdots \vc{h}_{N,N}) [\bphi] = \Lambda_n - \mbox{a constant vector on} 
		\ B_{r_{n,N}} (\vc{h}_{n,N}),\ n = 1,\cdots, N;	
		\end{equation}

		\item 
		
		\begin{equation} \label{E12RR}
			\vc{R}_N (\vc{h}_{1,N}, \cdots \vc{h}_{N,N}) [\bphi] (x) = \bfphi (x) \ \mbox{whenever}\ 
			x \in \R^d \setminus \cup_{n=1}^N B_{2 ( 5^{n-1} r_{n,N} ) } (\vc{h}_{n,N}) ;
		\end{equation}
		
		\item
		
		\begin{equation} \label{E14bisR}
			\left\| \vc{R}_N (\vc{h}_{1,N}, \cdots \vc{h}_{N,N})[\bphi] \right\|_{L^p(\R^d; \R^{d})} \leq c(p)^N
			\| \bphi\|_{L^{p}(\R^d; \R^{d})}
		\end{equation}
		\begin{equation} \label{E14R}
			\left\| \Grad \vc{R}_N (\vc{h}_{1,N}, \cdots \vc{h}_{N,N})[\bphi] \right\|_{L^p(\R^d; \R^{d \times d})} \leq
			c(p)^N 
			\| \Grad \bphi\|_{L^{p}(\R^d; \R^{d\times d})}
		\end{equation} 
		for any $1 < p < \infty$;
		\item  If $\bphi$ is compactly supported, then so is {$\vc{R}_N (\vc{h}_{1,N}, \cdots \vc{h}_{N,N})[\bphi]$}. 
		Specifically, 
		\begin{equation} \label{E15R}
			{\rm supp}[ \vc{R}_N (\vc{h}_{1,N}, \cdots \vc{h}_{N,N})[\bphi] ] \subset \Ov{ \mathcal{U}_{2( 5^{N} r_{N,N})}[ {\rm supp}[\bfphi ]] }.
		\end{equation}
\item

\begin{equation} \label{com:E14bisnew}
	\left\| \vc{R}_N (\vc{h}_{1,N}, \cdots \vc{h}_{N,N})[\bphi] -\bphi \right\|_{L^p(\R^d; \R^{d})} \leq c(p)^N 
	\| \bphi\|_{L^{p}(\cup_{i=n}^N B_{2 (5^{n-1} r_{n,N})} (\vc{h}_{n,N}); \R^{d})}
\end{equation}
\begin{equation} \label{com:E14new}
\left\| \Grad \vc{R}_N (\vc{h}_{1,N}, \cdots \vc{h}_{N,N})[\bphi] -\Grad\bphi\right\|_{L^p(\R^d; \R^{d \times d})} \leq  c(p)^N
\| \Grad \bphi\|_{L^{p}(\cup_{i=n}^N B_{2(5^{n-1} r_{n,N})} (\vc{h}_{n,N}); \R^{d\times d})}
\end{equation} 
for any $1 < p < \infty$.
	\end{itemize}
\end{Proposition}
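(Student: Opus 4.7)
The plan is to prove Proposition \ref{EP1R} by reducing each claim to its single-operator analogue in Proposition \ref{EP1}, combined with Lemma \ref{Lr1} for the constancy statement. Set
\[
r_n^* := 5^{n-1} r_{n,N} \text{ for } n \geq 2, \quad r_1^* := r_{1,N}, \qquad R_n := \vc{R}_{r_n^*}(\vc{h}_{n,N}),
\]
so that $\vc{R}_N(\vc{h}_{1,N}, \dots, \vc{h}_{N,N}) = R_1 \circ R_2 \circ \dots \circ R_N$. For $N = 1$ the statement coincides with Proposition \ref{EP1}; for $N \geq 2$ all claims are obtained by iterating the single-operator properties $N$ times.

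Well-definedness and extension to solenoidal $L^p_{\rm loc}$ functions, smoothness preservation \eqref{reg1R}, and the solenoidality \eqref{E13R} all transfer automatically from the single-operator setting. The constancy statement \eqref{E12R} is precisely Lemma \ref{Lr1}. Property \eqref{E12RR} follows since, by \eqref{E12}, each $R_n$ is the identity off $B_{2 r_n^*}(\vc{h}_{n,N})$, and a composition of operators each of which equals the identity off its own ball is the identity off the union. The support bound \eqref{E15R} comes from iterating \eqref{E15}: each factor enlarges the support by at most $2 r_n^*$, and
\[
\sum_{n=1}^N 2 r_n^* \;=\; 2 r_{1,N} + \sum_{n=2}^N 2 \cdot 5^{n-1} r_{n,N} \;\leq\; 2 r_{N,N} \cdot \tfrac{5^N - 1}{4} \;<\; 2 \cdot 5^N r_{N,N}.
\]
The global bounds \eqref{E14bisR}--\eqref{E14R} follow by applying \eqref{E14bis}--\eqref{E14} once per factor, picking up one power of $c(p)$ at each step.

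The main obstacle is the local estimate \eqref{com:E14bisnew}--\eqref{com:E14new}, which I would prove via the linearity of each $R_m$ and the telescoping identity
\[
\vc{R}_N[\bfphi] - \bfphi \;=\; \sum_{n=1}^N (R_1 \circ \dots \circ R_{n-1}) \Big[ (R_n - \mathrm{Id})[\bfphi] \Big],
\]
with the convention that the empty composition is the identity. By \eqref{E12} the inner factor $(R_n - \mathrm{Id})[\bfphi]$ is supported in $B_{2 r_n^*}(\vc{h}_{n,N})$, and by \eqref{E14bisnew}--\eqref{E14new} it is bounded in $L^p$ (resp. in gradient-$L^p$) by $c(p) \|\bfphi\|_{L^p(B_{2 r_n^*}(\vc{h}_{n,N}))}$ (resp. its gradient analogue). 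The outer composition $R_1 \circ \dots \circ R_{n-1}$ contributes a multiplicative factor $c(p)^{n-1}$ by the already-established global bounds \eqref{E14bisR}--\eqref{E14R}. Summing over $n = 1, \dots, N$ and absorbing the polynomial-in-$N$ prefactor (arising from discrete H\"older) into the exponential constant yields the claimed estimates with constant $c(p)^N$. The technical point requiring care is the bookkeeping of the constants, the radii $r_n^*$, and the nested ball structure; once the telescoping identity is in place, each summand reduces to a direct application of Proposition \ref{EP1}.
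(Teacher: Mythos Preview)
Your proposal is correct and follows the same approach as the paper, which does not give a detailed argument but simply invokes Lemma \ref{Lr1} together with \cite[Proposition 4.3]{FeiRoyZar2022}. Your explicit reduction of each claim to Proposition \ref{EP1} via iteration, together with the telescoping identity for the local estimates \eqref{com:E14bisnew}--\eqref{com:E14new} and the absorption of the factor $N$ into the exponential constant, is precisely the content of that cited result.
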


\begin{Remark}
The constant $c(p)$, depending on the norm of the Bogovskii operator, is one of the factors determining the specific value of the quantity $A$ in  	
\eqref{w14}.
	\end{Remark}

\section{Asymptotic limit: Proof of Theorem \ref{Tw1}}
\label{a}

Our ultimate goal is to let $N \to \infty$. {First, we derive suitable bounds uniform for $N \to \infty$.}
The velocity $\vu_N$ satisfies the energy inequality 
\begin{equation} \label{s2}
\frac{1}{2} \intO{ \vr_N |\vu_N|^2(\tau, \cdot) } + 
\int_0^\tau \intO{ \mathbb{S}_N : \Ds \vu_N } \leq \frac{1}{2} \intO{ \vr_{0,N} 
	|\vu_{0, N}|^2 } + \int_0^\tau \intO{ \vr_N \vc{g} \cdot \vu_N } \dt 
\end{equation}
for a.a. $\tau \in (0,T)$. 
We recall Fenchel--Young inequality 
\begin{align} 
\mathbb{S} : \mathbb{D} &\leq F(\mathbb{D}) + F^* (\mathbb{S}), \br 
\mathbb{S} : \mathbb{D} &= F(\mathbb{D}) + F^* (\mathbb{S}) \ 
\Leftrightarrow \ \mathbb{S} \in \partial F(\Ds \vu). 
\label{s3}
\end{align}
Since $\mathbb{S}_N \in \partial F(\Ds \vu_N)$, we can rewrite the energy inequality 
\eqref{s2} in the form {
\begin{align} 
	\frac{1}{2} \intO{ \vr_N |\vu_N|^2(\tau, \cdot) } &+ 
	\int_0^\tau \intO{ \Big[ F(\Ds \vu_N) + F^* (\mathbb{S}_N)\Big] }\ dt \br &\leq \frac{1}{2} \intO{ \vr_{0,N} 
		|\vu_{0, N}|^2 } + \int_0^\tau \intO{ \vr_N \vc{g} \cdot \vu_N } \dt \br 
		&\leq  \frac{1}{2} \intO{ \vr_{0,N} 
			|\vu_{0, N}|^2 } + \frac{1}{2} \int_0^\tau \intO{ \vr_N |\vc{g}|^2 } \dt  + 
			\frac{1}{2} \int_0^\tau \intO{ \vr_N |\vu_N|^2 } \dt.
		\label{s4} 
\end{align}
}

{
Next, we apply DiPerna--Lions theory \cite{DL} to the weak formulation of the equation of continuity \eqref{w1} to obtain 
\begin{equation} \label{s3a}
\intO{ G (\vr_N) (\tau, \cdot) } = \intO{ G(\vr_{0,N})} ,\ \tau > 0, 
\end{equation}	
for any continuous $G$ as soon as the integral on the right--hand side is finite. In particular, it follows from hypotheses 
\eqref{w9} and \eqref{w11a} that 
\begin{equation} \label{rhh}
	{ \vr_N \ \mbox{is bounded in}\ L^\infty(0,T; L^q(\Omega)),	}
\end{equation}
and
\begin{equation} \label{con:rho}
\vr_N \to \Ov{\vr} \ \mbox{weakly-(*) in}\ L^\infty(0,T; L^q (\Omega)) \ \mbox{and (strongly) in}\ C([0,T]; L^r(\Omega)), 
\ 1 \leq r < q.	
\end{equation}	
}

Finally, as $F$ satisfies the growth restriction \eqref{i5}, we obtain from \eqref{s4} {and \eqref{w1}:}
\begin{equation}\label{rhou}
\sqrt{\vr_N}\vu_N \mbox{ is bounded in }L^{\infty}(0,T;L^2(\Omega; \R^d)),
\end{equation}
\begin{equation}\label{bdd:uN}
\vu_N \mbox{ is bounded in } L^p(0,T;W^{1,p}_0(\Omega; \R^d)),
\end{equation}
\begin{equation*}
\mathbb{S}_N \mbox{ is bounded in } L^{p'}(0,T;L^{p'}(\Omega; \R^d)).
\end{equation*}

Thus there exists $\bu\in L^{\infty}(0,T;L^2(\Omega; \R^d)) \cap L^p(0,T;W^{1,p}_0(\Omega; \R^d))$, {
\[
\Div \vu = 0 \ \mbox{a.a. in}\ (0,T) \times \Omega,	
\]	
such that, up to a subsequence,}
\begin{equation}\label{con:ws}
 \sqrt{\vr_N}\vu_N \rightarrow \sqrt{\Ov{\vr}}\ \bu \mbox{ weakly-${*}$ in }L^{\infty}(0,T;L^2(\Omega; \R^d)),
 \end{equation}
 \begin{equation}\label{con:w}
 \vu_N \rightarrow \bu \mbox{ weakly in }L^p(0,T;W^{1,p}_0(\Omega; \R^d)),
 \end{equation}
  \begin{equation}\label{con:S}
\mathbb{S}_N \rightarrow \mathbb{S} \mbox{ weakly in }L^{p'}(0,T;L^{p'}(\Omega; \R^{d \times d})).
 \end{equation}
\medskip 

\subsection{Limit in the momentum equation}

Let  $\bfphi \in \DC ([0,T) \times \Omega; \R^d)$, $\Div \bfphi = 0$ be a smooth solenoidal function. 
Our goal is to plug the quantity $\vc{R}_N ( \vc{h}_{1,N}, \cdots, \vc{h}_{N,N})[ \bfphi ]$ as a test function in the  momentum balance
\begin{equation} \label{w4N}
\int_0^T \int_{\R^d} \Big[ \vr_N \vu_N \cdot \partial_t \bfphi + [\vr_N \vu_N \otimes \vu_N]: \Ds \bfphi\Big] \dxdt = \int_0^T \int_{\R^d} \Big[ \mathbb{S}_N : \Grad \bfphi - \vr_N \vc{g} \cdot \bfphi \Big] \dxdt - \int_{\R^d} \vr_{0,N} \vu_{0,N} \cdot \bfphi \dx 	,
\end{equation}
and perform the limit $N\to\infty$.

\subsubsection{Viscous stress}

 We want to estimate
$\| ( \bfphi - \vc{R}_N ( \vc{h}_{1,N}, \cdots, \vc{h}_{N,N})[ \bfphi ] )(t, \cdot) \|_{W^{1,p}(\R^d)}$. By virtue of the error estimates \eqref{com:E14bisnew}--\eqref{com:E14new}, we obtain 
\begin{multline}\label{errorW1p}
\left\| \vc{R}_N (\vc{h}_{1,N}, \cdots \vc{h}_{N,N})[\bphi] -\bphi\right\|^{p}_{W^{1,p}(\R^d)} \leq  c(p)^{pN}
\|\bphi\|^{p}_{W^{1,p}(\cup_{n=1}^N B_{2 (5^{n-1} r_{n,N})} (\vc{h}_{n,N}))} \\
 \leq c(p)^{pN}\|\bfphi\|^{p}_{C^{1}(\Omega)}\sum\limits_{n=1}^N |B_{2( 5^{n-1} r_{n,N})}|
 \aleq \|\bfphi\|^{p}_{C^{1}(\Omega)}A^N_1 \sum_{n=1}^N r_{n,N}^d,
\end{multline}
where {$A_1= A_1(p) = \max\{c(p)^{p}, 5^d \}$}. {Thus the choice $A = A_1(p)$ in hypothesis \eqref{w14} yields} 
\begin{equation} \label {error1}
	\left\| \vc{R}_N (\vc{h}_{1,N}, \cdots \vc{h}_{N,N})[\bphi] -\bphi\right\|_{W^{1,p}(\Omega, \R^d)} \to 0 \ \mbox{as}\ N\to \infty,
	\ \mbox{uniformly for}\ t \in [0,T].
\end{equation} 
In view of \eqref{con:S} and  \eqref{error1}, we may infer that 
\begin{equation}\label{I1}
	\int\limits_{0}^{T} \int\limits_{\Omega} \mathbb{S}_N: \Grad(\vc{R}_N ( \vc{h}_{1,N}, \cdots, \vc{h}_{N,N})[ \bfphi ]) \dx \dt
	\to \int_0^T \intO{ \mathbb{S}:  \Grad \bfphi } \dt
\end{equation}
for any $\bfphi \in C^1_c([0,T) \times \Omega; \R^d)$, $\Div \bfphi = 0$.

\subsubsection{Convective term}

{In view of the convergence \eqref{con:rho}, the uniform bounds \eqref{rhou}, \eqref{bdd:uN}, and the Sobolev embedding 
$W^{1,p} \hookrightarrow L^s$, $s \geq 1$ arbitrary finite, we get
\[
(\vr_N - \Ov{\vr}) (\vu_N \otimes \vu_N) \to 0 \ \mbox{in}\ L^r((0,T) \times \Omega; \R^{d \times d}) \ \mbox{for a certain}\ r = r(q) > 1.
\]
Revisiting the error estimates \eqref{errorW1p} we may infer that 
\[
\int_0^T \intO{ (\vr_N - \Ov{\vr}) (\vu_N \otimes \vu_N) : \Grad (\vc{R}_N ( \vc{h}_{1,N}, \cdots, \vc{h}_{N,N})[ \bfphi ]) } \dt \to 0
\ \mbox{as}\ N \to \infty 
\] 
provided the quantity $A$ in hypothesis \eqref{w14} is chosen as
\[
A = A(p,q) = \max \{ A_1(p), A_1(r') \},\ r' = r'(q),\ \frac{1}{r} + \frac{1}{r'} \leq 1. 
\]
In particular, we may suppose
\[
\vr_N \vu_N \otimes \vu_N =\sqrt{\vr_N}\sqrt{\vr_N}\vu_N \otimes \vu_N \to \Ov{\vr} \ \Ov{\bu \otimes \bu} \ \mbox{weakly in} \ L^r((0,T) \times \Omega; \R^{d \times d}), 
\]
and, consequently,  
\begin{equation} \label{I2}
\int\limits_{0}^{T} \int\limits_{\Omega} (\vr_N \vu_N \otimes \vu_N): \Grad(\vc{R}_N ( \vc{h}_{1,N}, \cdots, \vc{h}_{N,N})[ \bfphi ]) \dx \dt\rightarrow \int\limits_{0}^{T} \int\limits_{\Omega} \Ov{\vr} \ \overline{(\bu\otimes \bu)}:\Grad \bfphi \dx \dt \mbox{ as }N\rightarrow \infty,
\end{equation}
for any  $\bfphi \in C^1_c([0,T) \times \Omega; \R^d)$, $\Div \bfphi = 0$.} 

{Finally, following step by step the arguments of
\cite[Section 5.2.2]{FeiRoyZar2022} we show 
\[
\intO{ \vr_N \vu_N \cdot \phi } \to \intO{ \Ov{\vr} \vu \cdot \phi }\ 
\ \in L^r(0,T) \ \mbox{for any}\ r \geq 1,\  \mbox{and any}\ \phi \in C^1_c(\Omega),\ \Div \phi = 0.
\]
Consequently, my means of \eqref{con:ws}, \eqref{con:w} and the standard Aubin--Lions argument, 
\begin{equation} \label{l2c}
\vu_N \to \vu \ \mbox{(strongly) in}\ L^2((0,T) \times \Omega; \R^d),
\end{equation}
in particular,}
\begin{equation} \label{c13}
\Ov{ \bu \otimes \bu } =  \bu \otimes \bu.
\end{equation}

\subsubsection{Time derivative}

Our next goal is to establish the limit 
\begin{equation}\label{I3}
\int\limits_{0}^{T} \int\limits_{\Omega} \vr_N \vu_N \cdot\partial_t (\vc{R}_N ( \vc{h}_{1,N}, \cdots, \vc{h}_{N,N})[ \bfphi ]) \dx \dt \to   \int\limits_{0}^{T} \int\limits_{\Omega}  \bu \cdot\partial_t  \bfphi  \dx \dt.
\end{equation}
{Recall the notation, 
\[
\vc{Y}_{n,N}(t) = \frac{\D }{\dt} \vc{h}_{n,N}(t) \ \mbox{for a.a.}\ t \in (0,T). 
\]
}

{First, exactly as in \cite[Section 5.1]{FeiRoyZar2022}, we have the formula}
\begin{align}\label{error:time}
&\partial_t \bfphi - \partial_t \vc{R}_N ( \vc{h}_{1,N}, \cdots, \vc{h}_{N,N})[ \bfphi ] = 
\partial_t \bfphi -  \vc{R}_N ( \vc{h}_{1,N}, \cdots, \vc{h}_{N,N})[ \partial_t \bfphi ]  \br 
&+ \sum_{n=1}^N \nabla_{\vc{h}_{n}} \vc{R}_N ( \vc{h}_{1,N}, \cdots, \vc{h}_{N,N})[ \bfphi ] \cdot \vc{Y}_{n,N} \br
&=\partial_t \bfphi -  \vc{R}_N ( \vc{h}_{1,N}, \cdots, \vc{h}_{N,N})[ \partial_t \bfphi ] \br
&+ \sum_{n=1}^{N} \vc{R}_N (\vc{h}_{1,N}, \cdots, \vc{h}_{n-1,N}) \left[ \Grad \Big( \vc{R}_{5^{n-1} r_{n,N}}(\vc{h}_{n,N}, \cdots, \vc{h}_{N,N}) [\bfphi] \Big) \cdot \vc{Y}_{n,N} \right.
\br &- \left. \vc{R}_{5^{n-1} r_{n,N}}(\vc{h}_{n,N}) \left[ \Grad \Big( \vc{R}_{5^n r_{n,N}}(\vc{h}_{n+1,N}, \cdots, \vc{h}_{N,N}) [\bfphi] \Big) \cdot \vc{Y}_{n,N}                           \right]  \right], 
\end{align}
with the {notational} convention  
\[
\vc{R}_N (\vc{h}_{1,N}, \cdots, \vc{h}_{0,N}) = 
\vc{R}_{5^N r_{N,N}} (\vc{h}_{N+1,N}, \cdots, \vc{h}_{N,N}) = {\rm Id},
\]
{where, in accordance with \eqref{formula1}}
\begin{equation} \label{supp}
{\rm supp} \left[  \sum_{n=1}^N \nabla_{\vc{h}_{n}} \vc{R}_N ( \vc{h}_{1,N}, \cdots, \vc{h}_{N,N})[ \bfphi ] \right] \subset 
\cup_{n=1}^N B_{2 (5^{n-1} r_{n,N})} (\vc{h}_{n,N}).
\end{equation}

Similarly to the proof of \eqref{error1}, {we have} 
\begin{equation} \label {error2}
	\| ( \partial_t \bfphi -  \vc{R}_N ( \vc{h}_{1,N}, \cdots, \vc{h}_{N,N})[ \partial_t \bfphi ] )(t, \cdot) \|_{W^{1,p}(\R^d; \R^d)} \to 0 \ \mbox{as}\ N\to \infty
	\ \mbox{uniformly in}\ t \in [0,T]. 
\end{equation}

{Next, in accordance with \eqref{supp}, the last error term in \eqref{error:time} can be estimated as}
\begin{align} 
&\left| \sum_{n=1}^{N} \vc{R}_N (\vc{h}_{1,N}, \cdots, \vc{h}_{n-1,N}) \Big[ \Grad \vc{R}_{5^{n-1} r_{n,N}}(\vc{h}_{n,N}, \cdots, \vc{h}_{N,N}) [\bfphi] \cdot \vc{Y}_{n,N} \right.
\br &- \left. \vc{R}_{5^{n-1} r_{n,N}}(\vc{h}_{n,N}) \left[ \Grad \vc{R}_{5^n r_{n,N}}(\vc{h}_{n+1,N}, \cdots, \vc{h}_{N,N}) [\bfphi] \cdot \vc{Y}_{n,N}                           \right]  \Big] \right| \br 
& \quad \quad \leq \mathds{1}_{ \cup_{n = 1}^N B_{2( 5^{n-1} r_{n,N})}(\vc{h}_{n,N}) }
\sum_{n=1}^{N} |\vc{Y}_{n,N} | \left(  \Big| \vc{R}_N (\vc{h}_{1,N}, \cdots, \vc{h}_{n-1,N})   \Big[ \Grad \vc{R}_{5^{n-1} r_{n,N}}(\vc{h}_{n,N}, \cdots, \vc{h}_{N,N}) [\bfphi] \Big]  \right. \br &\quad \quad \quad \quad \quad \quad + 
\left. \vc{R}_N (\vc{h}_{1,N}, \cdots, \vc{h}_{n,N}) \Big[ \Grad \vc{R}_{5^n r_{n,N}}(\vc{h}_{n+1,N}, \cdots, \vc{h}_{N,N}) [\bfphi] \Big] \Big|\right)  .
\label{error3} 
\end{align}

In view of the estimates \eqref{error2}, \eqref{error3}, we can establish \eqref{I3} by showing
\begin{equation} \label{I3bis}
\int_0^T \intO{ \mathds{1}_{ \cup_{n = 1}^N B_{2 (5^{n-1} r_{n,N})}(\vc{h}_{n,N})  } \rho_N {|\bu_N |} \sum_{n= 1}^N |\vc{Y}_{n,N}| |\vc{G}_{n,N}|
	} \dt \to 0	\quad\mbox{as}\quad N\rightarrow\infty,
	\end{equation} 
where, 
\begin{multline*}
\vc{G}_{n,N}= \vc{R}_N (\vc{h}_{1,N}, \cdots, \vc{h}_{n-1,N})   \Big[ \Grad \vc{R}_{5^{n-1} r_{n,N}}(\vc{h}_{n,N}, \cdots, \vc{h}_{N,N}) [\bfphi] \Big]  \\ + \vc{R}_N (\vc{h}_{1,N}, \cdots, \vc{h}_{n,N}) \Big[ \Grad \vc{R}_{5^n r_{n,N}}(\vc{h}_{n+1,N}, \cdots, \vc{h}_{N,N}) [\bfphi] \Big].
\end{multline*}
By virtue of \eqref{E14bisR}, \eqref{E14R},
\begin{equation} \label{I4bis}
\left\| \vc{G}_{n,N} \right\|_{L^\infty(0,T; L^r(\Omega; \R^d ))} \leq c(r)^{N}\| \bfphi \|_{L^{\infty}(0,T; W^{1,r} (\R^d; \R^d))}
\ \mbox{for any}\ {1 < r < \infty},\ n=1,\cdots, N.
\end{equation}

{Consequently, by means of H\" older inequality and the Sobolev embedding, we get
\begin{multline} \label{I4}
	\left| \intO{ \mathds{1}_{ \cup_{n = 1}^N B_{2\cdot 5^{n-1} r_{n,N}}(\vc{h}_{n,N})  } \rho_N |\bu_N | \cdot  |\vc{Y}_{n,N}| |\vc{G}_{n,N}|
	} \right| \\
	\leq c(r)^{N}\|\bfphi\|_{W^{1,r}(\Omega)}\|\vr_N\|_{L^q(\Omega)}\|\bu_N\|_{L^{s}(\Omega; \R^d)}|\vc{Y}_{n,N}(t)|\left(\sum\limits_{n=1}^N |B_{2( 5^{n-1} r_{n,N})}|\right)^{1-\frac{1}{r}-\frac{1}{q} - \frac{1}{s}}, 
\end{multline}
where  $q > 1$ and $r$ and $s$ are arbitrary finite.}

{Now, in view of hypothesis \eqref{w10a},
\[
\int_{\mathcal{S}_{n,N}(t) } |\vc{w}_{n,N}|^2 \ \dx = 
\int_{\mathcal{S}_{n,N}(t) } \left( |\vc{Y}_{n,N}(t)|^2 
 + |{\bfomega}_{n,N} \wedge (x - \vc{h}_{n,N}(t) ) |^2 \right) \dx.
\]	
Consequently, following the arguments of \cite[Section 3.1]{FeiRoyZar2022} 
and making use of hypothesis \eqref{w10} as the case may be, we obtain the bound
on the rigid translation velocities
\begin{equation} \label{rigid}
\| \vc{Y}_{n,N} \|_{L^p(0,T)} \aleq |\mathcal{S}_{n,N}|^\alpha , \ \alpha > 0 \ \mbox{arbitrary if} \ p = d, \ \alpha = 0 \ \mbox{if}\ p > d.
\end{equation}
}
Thus we may fix $A = A(p,q)$ in hypothesis \eqref{w14} so that the right--hand side of \eqref{I4} tends to zero in $L^1(0,T)$ as $N \to \infty$ for any fixed 
test function $\bfphi \in C^1_c([0,T) \times \Omega; \R^d)$, $\Div \bfphi = 0$.

\subsection{Identifying the limit in the viscous stress}
\label{s}

{Summarizing the preceding part we
have obtained the system of equations} 
\begin{equation} \label{s1a}
	\int_0^T \intO{ \Big[ \Ov{\vr} \vu \cdot \partial_t \bfphi + \Ov{\vr} (\vu \otimes \vu) : \Grad \bfphi 
		\Big] } \dt = \int_0^T \intO{ \Big[ \mathbb{S}: \Ds \bfphi - \Ov{\vr} \vc{g} \cdot \bfphi 
		\Big] } - \intO{ \vu_0 \cdot \bfphi(0, \cdot) }
\end{equation}
for any $\bfphi \in C^1_c([0,T) \times \Omega; \R^3)$, $\Div \bfphi = 0$, together with the incompressibility constraint 
\begin{equation} \label{s1b}
\Div \vu = 0 \ \mbox{a.a. in}\ (0,T) \times \Omega.
\end{equation}

Our ultimate goal is to identify the weak limit of the viscous stresses. Specifically, we want to show that $$\mathbb{S} \in \partial F(\Ds \vu) \ \mbox{for a.a.}\ 
t \in (0,T).$$ 
As shown in \eqref{con:S},  
\[
\mathbb{S}_N \to \mathbb{S} \ \mbox{weakly in}\ L^{p'}(0,T; L^{p'}(\Omega; \R^d)). 
\]

We recall that 
\[
\vu_N \to \vu \ \mbox{weakly in}\ 
L^p(0,T; W^{1,p}_0(\Omega; \R^d)) \ \mbox{and (strongly) in}\ 
L^2(0,T; L^2(\Omega; \R^d)). 
\]
Moreover $(\vr_N,\vu_N)$ satisfy the energy inequality \eqref{s2}.
In addition, using hypothesis \eqref{w11b} concerning (strong) convergence of the initial data, 
we get 
\[
\lim_{N \to \infty} \frac{1}{2} \intO{ \vr_{0,N} 
	|\vu_{0, N}|^2 } + \int_0^\tau \intO{ \vr_N \vc{g} \cdot \vu_N } \dt = 
\frac{1}{2} \intO{ \Ov{\vr} 
	|\vu_{0}|^2 } + \int_0^\tau \intO{  \Ov{\vr} \vc{g} \cdot \vu } \dt.	
\]
Finally, as convex functions are weakly lower semi--continuous, we may infer that 
\begin{align} 
	\frac{1}{2} \intO{ \Ov{\vr} |\vu|^2(\tau, \cdot) } &+ 
	\int_0^\tau \intO{ \Big[ F(\Ds \vu) + F^* (\mathbb{S}) \Big] } \br &\leq
	 \limsup_{N \to \infty} \left[ 	\frac{1}{2} \intO{ \vr_N |\vu_N|^2(\tau, \cdot) } + 
	 \int_0^\tau \intO{ \Big[ F(\Ds \vu_N) + F^* (\mathbb{S}_N) \Big]  } \right]
	\br &\leq
	 \frac{1}{2} \intO{  
		\Ov{\vr} |\vu_{0}|^2 } + \int_0^\tau \intO{  \Ov{\vr} \vc{g} \cdot \vu } \dt\  
		\mbox{for a.a.}\ \tau \in (0,T).
	\label{s5} 
\end{align}

The next step is to derive the energy balance for the limit system \eqref{s1a}, \eqref{s1b}. 
As $p \geq d$, we have, by the standard Sobolev embedding relations, 
\[
\vu \otimes \vu \in {L^{\frac{p}{2}} (0,T; L^s(\Omega)) \cap L^\infty(0,T; L^2(\Omega; \R^d)) \ \mbox{for any finite}\ s  \geq 1. } 
\]
In particular, 
\[
\vu \otimes \vu \in L^{p'}(0,T; L^{p'}(\Omega; \R^{d \times d}). 
\]
Thus the limit system \eqref{s1a} can be interpreted as 
\[
\partial_t \vu = - \Div (\vu \otimes \vu) + \frac{1}{\Ov{\vr}} \Div \mathbb{S} + \vc{g} 
\in L^{p'}(0, T; L^{p'}(\Omega; \R^d)) + \Grad \Pi,  
\]
where the solenoidal field $\vu$ can be used as ``test function'' (see e.g. von Wahl \cite{vonW}). 
Accordingly, we deduce the energy \emph{equality}
\begin{equation} \label{s6}
\frac{1}{2} \intO{ \Ov{\vr}|\vu (\tau, \cdot)|^2 } + \int_0^\tau \intO{ 
	\mathbb{S}: \Ds \vu } = \frac{1}{2} \intO{ \Ov{\vr}|\vu_0|^2 } + \intO{ \Ov{\vr} \vc{g} \cdot \vu }.
	\end{equation}
Plugging \eqref{s6} in \eqref{s5}, we conclude 
\begin{align}  
\int_0^\tau &\intO{ \Big[ F(\Ds \vu) + F^* (\mathbb{S}) \Big] } \br &\leq
\limsup_{N \to \infty}   
\int_0^\tau \intO{ \Big[ F(\Ds \vu_N) + F^* (\mathbb{S}_N)  \Big] } \leq \int_0^\tau \intO{ \mathbb{S} : \Ds \vu }\dt. 
\ \mbox{for a.a.}\  \tau \in (0,T).	
	\label{s7}
	\end{align}
A direct application of Fenchel's inequality yields the desired conclusion 
\begin{equation} \label{s8}
F(\Ds \vu) + F^* (\mathbb{S}) = \mathbb{S}: \Ds \vu 
\ \Rightarrow \ \mathbb{S} \in \partial F(\Ds \vu) \ \mbox{for a.a.}\ 
t \in (0,T).
\end{equation}		

\subsection{Strong Convergence} 
Finally, we discuss the problem of strong convergence of the velocity gradients. 
It follows from \eqref{s7} that 
\begin{align}
\limsup_{N \to \infty}&   
\int_0^\tau \intO{ \Big[ F(\Ds \vu_N) - F(\Ds \vu) \Big] } + 
\int_0^\tau \intO{ \Big[ F^* (\mathbb{S}_N) - F^*(\mathbb{S})  \Big] }\br &\leq \int_0^\tau \intO{ \Big[ \mathbb{S} : \Ds \vu - F(\Ds \vu) - F^*(\mathbb{S}) \Big]   }\dt \leq 0.
\nonumber
\end{align}
As both $F$ and $F^*$ are convex, we may infer that 
\begin{equation} \label{s9}
\int_0^\tau \intO{ F(\Ds \vu_N) } \to \int_0^\tau \intO{ F(\Ds \vu) },\ 
\int_0^\tau \intO{ F^*(\mathbb{S}_N) } \to \int_0^\tau \intO{ F^*(\mathbb{S}) }. 
\end{equation} 	

Thus if $F$ is strictly convex,
we conclude, passing to a subsequence as the case may be, 
\[
\Ds \vu_N \to \Ds \vu \ \mbox{a.a. in}\ (0,T) \times \Omega, 
\]
yielding (for the original sequence)
\begin{equation} \label{s10}
\Ds \vu_N \to \Ds \vu \ \mbox{in}\ L^1((0,T) \times \Omega; \R^{d \times d}_{\rm sym}), 
\end{equation}
cf. \cite[Appendix, Theorem 11.27]{FeNo6A}.

Finally, as $F$ complies with the growth restriction \eqref{i5}, we combine 
\eqref{s9}, \eqref{s10} to conclude 
\[
\int_0^T \intO{ 
	|\Ds \vu_N |^p }\dt \to \int_0^T \intO{ 
	|\Ds \vu |^p }\dt \ \Rightarrow \ \Ds \vu_N \to \Ds \vu \ \mbox{in}\ 
	L^p((0,T) \times \Omega; R^{3 \times 3}_{\rm sym}) . 
\]

\section{Concluding remarks}\label{c}

We conclude by several remarks concerning possible extensions of the above result.

\subsection{Total volume}

If $d = p = 2$, the hypothesis \eqref{w14} is compatible with the so--called critical distribution of fixed balls 
in the homogenization problem, see e.g. Allaire \cite{Allai4}, while it is super--critical if $p > d$, where any compact set has non--zero $p-$capacity. 
We actually conjecture that the conclusion of Theorem \ref{Tw1} remains if  
\[
{ {\rm vol}[N] \approx A^{-N} \ \mbox{for \emph{some}}\ A > 1.}
\]
The value of the exponential factor $A = A(p,q)$ in Theorem \ref{Tw1} results from the specific construction of the restriction operator $\vc{R}$ that must account for possible collisions of 
several rigid bodies. This stumbling block could be possibly removed if a qualitative information on the mutual distance of two bodies were available.

\subsection{Heavy bodies}

Hypotheses \eqref{w9}, \eqref{w14} allow for ``heavy bodies'', for which the mass densities are allowed to grow as $N \to \infty$. Indeed it is easy to check 
that \eqref{w9} follows from \eqref{w14} {if  
\[
0 < \vr_{n,N} \leq N^\beta ,\ n =1,\dots, N,\ \beta > 0 \ \mbox{arbitrary}.
\]
}
\subsection{More general viscous stresses}

We deliberately focused on shear thickening fluids, more precisely we consider $p \geq d$. Similar results can be expected for general $p > 1$ at least on a short time interval $[0, T_{\rm max})$. Here $T_{\rm max} > 0$ is the life--span of the smooth solution of the \emph{limit} problem \eqref{w13}. Indeed using the 
same method one could establish convergence to generalized dissipative solutions introduced in \cite{AbbFei2} and use the weak--strong uniqueness result proved 
therein.

\def\cprime{$'$} \def\ocirc#1{\ifmmode\setbox0=\hbox{$#1$}\dimen0=\ht0
	\advance\dimen0 by1pt\rlap{\hbox to\wd0{\hss\raise\dimen0
			\hbox{\hskip.2em$\scriptscriptstyle\circ$}\hss}}#1\else {\accent"17 #1}\fi}

%\bibliography{citace}

\begin{thebibliography}{10}
	
	\bibitem{AbbFei2}
	A.~Abbatiello and E.~Feireisl.
	\newblock On a class of generalized solutions to equations describing
	incompressible viscous fluids.
	\newblock {\em Ann. Mat. Pura Appl. (4)}, {\bf 199}(3):1183--1195, 2020.
	
	\bibitem{Allai4}
	G.~Allaire.
	\newblock Homogenization of the {N}avier-{S}tokes equations in open sets
	perforated with tiny holes. {I}. {A}bstract framework, a volume distribution
	of holes.
	\newblock {\em Arch. Rational Mech. Anal.}, 113(3):209--259, 1990.
	
	\bibitem{BlMaRa}
	J.~Blechta, J.~M\'{a}lek, and K.~R. Rajagopal.
	\newblock On the classification of incompressible fluids and a mathematical
	analysis of the equations that govern their motion.
	\newblock {\em SIAM J. Math. Anal.}, {\bf 52}(2):1232--1289, 2020.
	
	\bibitem{BOG}
	M.~E. Bogovskii.
	\newblock Solution of some vector analysis problems connected with operators
	div and grad (in {R}ussian).
	\newblock {\em Trudy Sem. S.L. Sobolev}, {\bf 80}(1):5--40, 1980.

 \bibitem{BraNec2}
	M.~Bravin and {\v S}.~Ne{\v c}asov{\' a}.
  \newblock {On the velocity of a small rigid body in a viscous incompressible fluid in dimension two and three},
  \newblock {\em Journal of Dynamics and Differential Equations}, 1--16, 2023.

	
	\bibitem{BreitB}
	D.~Breit.
	\newblock {\em Existence theory for generalized {N}ewtonian fluids}.
	\newblock Mathematics in Science and Engineering. Elsevier/Academic Press,
	London, 2017.
	
	\bibitem{BMR1}
	M.~Bul{\' \i}{\v c}ek, J.~M{\' a}lek, and K.R. Rajagopal.
	\newblock Navier's slip and evolutionary {N}avier-{S}tokes-like systems with
	pressure and shear- rate dependent viscosity.
	\newblock {\em Indiana Univ. Math. J.}, {\bf 56}:51--86, 2007.

  \bibitem{MR4066792}
 M.~Chipot, J.~ Droniou, G.~ Planas, J.~C.~Robinson and W.~Xue.
 \newblock Limits of the {S}tokes and {N}avier-{S}tokes equations in a
punctured periodic domain.
\newblock {\em Anal. Appl. (Singap.)}, {\bf 18}(2): 211--235, 2020.

 \bibitem{MR2781594}
	M.~Dashti and J.~C.~Robinson.
	\newblock The motion of a fluid-rigid disc system at the zero limit of the
	rigid disc radius.
	\newblock {\em Arch. Ration. Mech. Anal.}, {\bf 200}(1):285--312, 2011.
	
	\bibitem{DieRuzSch}
	L.~Diening, M.~R\ocirc{u}\v{z}i\v{c}ka, and K.~Schumacher.
	\newblock A decomposition technique for {J}ohn domains.
	\newblock {\em Ann. Acad. Sci. Fenn. Math.}, {\bf 35}(1):87--114, 2010.
	
	\bibitem{DL}
	R.J. DiPerna and P.-L. Lions.
	\newblock Ordinary differential equations, transport theory and {S}obolev
	spaces.
	\newblock {\em Invent. Math.}, {\bf 98}:511--547, 1989.
	
	\bibitem{EF65}
	E.~Feireisl.
	\newblock On the motion of rigid bodies in a viscous incompressible fluid.
	\newblock {\em J. Evolution Equations}, {\bf 3}:419--441, 2003.
	
	\bibitem{FeHiNe}
	E.~Feireisl, M.~Hillairet, and {\v{S}.}~Ne{\v{c}}asov{\'a}.
	\newblock On the motion of several rigid bodies in an incompressible
	non-{N}ewtonian fluid.
	\newblock {\em Nonlinearity}, 21(6):1349--1366, 2008.
	
	\bibitem{FeNo6A}
	E.~Feireisl and A.~Novotn\'y.
	\newblock {\em Singular limits in thermodynamics of viscous fluids}.
	\newblock Advances in Mathematical Fluid Mechanics. Birkh\"auser/Springer,
	Cham, 2017.
	\newblock Second edition.
	
	\bibitem{FeiRoyZar2022}
	E.~Feireisl, A.~Roy, and A.~Zarnescu.
	\newblock On the motion of a large number of small rigid bodies in a viscous
	incompressible fluid.
	\newblock {\em J. Math. Pures Appl. (9)}, {\bf 175}:216--236, 2023.
	
	\bibitem{GALN}
	G.~P. Galdi.
	\newblock {\em An introduction to the mathematical theory of the {N}avier -
		{S}tokes equations, Second Edition}.
	\newblock Springer-Verlag, New York, 2003.
	
	\bibitem{GEHEHI}
	M.~Gei{\ss}ert, H.~Heck, and M.~Hieber.
	\newblock On the equation {${\rm div}\,u=g$} and {B}ogovski\u\i's operator in
	{S}obolev spaces of negative order.
	\newblock In {\em Partial differential equations and functional analysis},
	volume 168 of {\em Oper. Theory Adv. Appl.}, pages 113--121. Birkh\"auser,
	Basel, 2006.
	
	\bibitem{GLSE}
	M.D. Gunzburger, H.~C. Lee, and A.~Seregin.
	\newblock Global existence of weak solutions for viscous incompressible flow
	around a moving rigid body in three dimensions.
	\newblock {\em J. Math. Fluid Mech.}, {\bf 2}:219--266, 2000.

 	\bibitem{HeIft1}
	J.~He and D.~Iftimie.
	\newblock A small solid body with large density in a planar fluid is
	negligible.
	\newblock {\em J. Dynam. Differential Equations}, {\bf 31}(3):1671--1688, 2019.
	
	\bibitem{HeIft2}
	J.~He and D.~Iftimie.
	\newblock On the small rigid body limit in 3{D} incompressible flows.
	\newblock {\em J. Lond. Math. Soc. (2)}, {\bf 104}(2):668--687, 2021.

 \bibitem{MR2244381}
	D.~Iftimie, M.~C. Lopes~Filho, and H.~J. Nussenzveig~Lopes.
	\newblock Two-dimensional incompressible viscous flow around a small obstacle.
	\newblock {\em Math. Ann.}, {\bf 336}(2):449--489, 2006.
	
	\bibitem{Juda}
	N.~V. Judakov.
	\newblock The solvability of the problem of the motion of a rigid body in a
	viscous incompressible fluid.
	\newblock {\em Dinamika Splo\v{s}n. Sredy}, ({\bf 18}):249--253, 255, 1974.

 \bibitem{MR2557320}
	C.~Lacave.
	\newblock Two-dimensional incompressible viscous flow around a thin obstacle
	tending to a curve.
	\newblock {\em Proc. Roy. Soc. Edinburgh Sect. A}, {\bf 139}(6):1237--1254, 2009.

 \bibitem{LacTak}
	C.~Lacave and T.~Takahashi.
	\newblock Small moving rigid body into a viscous incompressible fluid.
	\newblock {\em Arch. Ration. Mech. Anal.}, {\bf 223}(3):1307--1335, 2017.
	
	\bibitem{Lady67}
	O.~A. Lady\v{z}enskaja.
	\newblock New equations for the description of the motions of viscous
	incompressible fluids, and global solvability for their boundary value
	problems.
	\newblock {\em Trudy Mat. Inst. Steklov.}, {\bf 102}:85--104, 1967.
	
	\bibitem{MNRR}
	J.~M{\' a}lek, J.~Ne{\v c}as, M.~Rokyta, and M.~R{\accent23u}{\v z}i{\v c}ka.
	\newblock {\em Weak and measure-valued solutions to evolutionary PDE's}.
	\newblock Chapman and Hall, London, 1996.
	
	\bibitem{MAL5}
	J.~M{\'a}lek and K.~R. Rajagopal.
	\newblock Mathematical issues concerning the {N}avier-{S}tokes equations and
	some of its generalizations.
	\newblock In {\em Evolutionary equations. Vol. II}, Handb. Differ. Equ., pages
	371--459. Elsevier/North-Holland, Amsterdam, 2005.
	
	\bibitem{MAL1}
	J.~M{\'a}lek and K.~R. Rajagopal.
	\newblock Incompressible rate type fluids with pressure and shear-rate
	dependent material moduli.
	\newblock {\em Nonlinear Anal. Real World Appl.}, 8(1):156--164, 2007.
	
	\bibitem{SST}
	J.A. San~Martin, V.~Starovoitov, and M.~Tucsnak.
	\newblock Global weak solutions for the two dimensional motion of several rigid
	bodies in an incompressible viscous fluid.
	\newblock {\em Arch. Rational Mech. Anal.}, {\bf 161}:93--112, 2002.
	
	\bibitem{Staro}
	V.~N. Starovoitov.
	\newblock Behavior of a rigid body in an incompressible viscous fluid near a
	boundary.
	\newblock In {\em Free boundary problems ({T}rento, 2002)}, volume~{\bf 147} of
	{\em Internat. Ser. Numer. Math.}, pages 313--327. Birkh\"{a}user, Basel,
	2004.
	
	\bibitem{STA}
	V.N. Starovoitov.
	\newblock Nonuniqueness of a solution to the problem on motion of a rigid body
	in a viscous incompressible fluid.
	\newblock {\em J. Math. Sci.}, {\bf 130}:4893--4898, 2005.
	
	\bibitem{vonW}
	W.~von Wahl.
	\newblock {\em The equations of {N}avier-{S}tokes and abstract parabolic
		equations}, volume~E8 of {\em Aspects of Mathematics}.
	\newblock Friedr. Vieweg \& Sohn, Braunschweig, 1985.
	
\end{thebibliography}
%\bibliographystyle{plain}

\end{document}